\documentclass[11pt]{article}

\usepackage[T1]{fontenc}
\usepackage[utf8]{inputenc}
\usepackage[english]{babel}
\usepackage{amsmath,amssymb,amsthm,mathtools}
\usepackage{graphicx}
\usepackage{enumerate}
\usepackage{microtype}
\usepackage{geometry}
\usepackage{hyperref}
\hypersetup{colorlinks=true,linkcolor=blue,citecolor=blue,urlcolor=blue}

\newtheorem{theorem}{Theorem}[section]
\newtheorem{proposition}[theorem]{Proposition}
\newtheorem{lemma}[theorem]{Lemma}
\newtheorem{corollary}[theorem]{Corollary}
\newtheorem{conjecture}[theorem]{Conjecture}
\theoremstyle{remark}
\newtheorem{remark}[theorem]{Remark}

\newtheorem{problem}[theorem]{Problem}
\theoremstyle{definition}
\newtheorem{definition}[theorem]{Definition}

\newcommand{\C}{\mathbb C}
\newcommand{\R}{\mathbb R}
\newcommand{\Q}{\mathbb Q}
\newcommand{\Qbar}{\overline{\mathbb Q}}
\newcommand{\Z}{\mathbb Z}
\newcommand{\F}{\mathbb F}
\newcommand{\PP}{\mathbb P}

\newcommand{\Gal}{\operatorname{Gal}}
\newcommand{\Frob}{\operatorname{Frob}}
\newcommand{\Deck}{\operatorname{Deck}}
\newcommand{\End}{\operatorname{End}}
\newcommand{\ord}{\operatorname{ord}}

\newcommand{\Div}{\operatorname{Div}}
\newcommand{\SL}{\operatorname{SL}}
\newcommand{\PSL}{\operatorname{PSL}}
\newcommand{\GL}{\operatorname{GL}}
\newcommand{\PGL}{\operatorname{PGL}}
\newcommand{\Tr}{\operatorname{Tr}}

\title{Peels of Equilateral Triangulated Surfaces}
\author{Alberto Verjovsky\thanks{Instituto de Matem\'aticas, Unidad Cuernavaca, Universidad Nacional Aut\'onoma de M\'exico (UNAM), Av. Universidad s/n, Col. Lomas de Chamilpa, 62210 Morelos, M\'exico. Email: \texttt{albertoverjovsky@gmail.com}}}
\date{}

\begin{document}
\maketitle

\begin{abstract}
This paper studies the modularity problem for elliptic curves from the elementary geometry of peels of decorated equilateral triangulations. A peel is a finite developing domain together with the boundary identifications that reconstruct the triangulated surface. In genus one it recovers the universal cover and its rank-two deck lattice. For a fixed prime $\ell$, the finite unramified coverings $[\ell^n]:E\to E$ form a compatible tower whose deck groups are $E[\ell^n]$; their inverse limit is the Tate module $T_\ell(E)$, and over $\Q$ the compatible Galois action gives the usual $\ell$-adic representation. Cyclic isogenies, by contrast, give the index-$q$ lattice neighbors that form the local model for Hecke and Brandt correspondences. For a semistable elliptic curve $E/\Q$ of prime conductor $p$, the missing reciprocity is the construction, without using modularity, of a nonzero class $c_E$ in the degree-zero supersingular Brandt module such that $B_qc_E=a_q(E)c_E$ at every good prime. I make this obstruction explicit and study oriented CM packets attached to the Frobenius discriminants $D_q=a_q(E)^2-4q$. New elementary results show that $D_q$ is a nonsquare modulo $p$ exactly when the residual Frobenius polynomial is irreducible, that any residual image containing such an element gives a positive-density set of primes for which $p$ is inert in the quadratic field determined by $D_q$, and that the corresponding discriminants are necessarily unbounded. I also give an oriented nonvanishing criterion and an ordinary Brandt nonvanishing criterion in terms of a Frobenius-conjugate pair, equivalently an irreducible quadratic factor of the Hilbert class polynomial modulo $p$. A precise conditional consequence of $p$-adic CM equidistribution is recorded. At conductor $37$ the construction is completely explicit: the discriminant $-76$ produces the anti-invariant Brandt line and the $2$-Brandt eigenvalue agrees with $a_2(E)$. The final equality for all Hecke operators remains the modularity barrier; no new proof of the Modularity Theorem is claimed.
\end{abstract}

\noindent\textbf{Keywords.} elliptic curves, Belyi maps, dessins d'enfants, equilateral triangulations, peels, Tate modules, Hecke operators, Brandt modules, complex multiplication, modularity

\noindent\textbf{Mathematics Subject Classification (2020).} Primary 11G05; Secondary 11F11, 11F25, 11G32, 14H57.

\section{Overture: triangles before arithmetic}

I would like to begin with a picture rather than with arithmetic machinery.  The picture is completely elementary.  Start with an elliptic curve $E$ carrying a finite decorated equilateral triangulation.  Remove the vertices.  What remains is a torus with finitely many punctures, decomposed into triangles whose vertices have disappeared to the ends.  In the Belyi situation these triangles can be read as hyperbolic ideal triangles.

More precisely, let
\[
        f:E\longrightarrow \PP^1
\]
be a Belyi map, and put
\[
        V=f^{-1}\{0,1,\infty\},\qquad E^\circ=E\setminus V.
\]
Then
\[
        f:E^\circ\longrightarrow \PP^1\setminus\{0,1,\infty\}
\]
is an unramified covering.  The thrice-punctured sphere is obtained by gluing two hyperbolic ideal triangles.  Pulling this picture back gives an ideal triangulation of $E^\circ$ by copies of the same ideal triangle.  Each ideal triangle has angles $0,0,0$ and area $\pi$.  It is the limiting case $m=\infty$ of the hyperbolic triangles with angles $\pi/m,\pi/m,\pi/m$ used later in the tessellations $T(m,m,m)$.

Nothing sophisticated is needed to count the pieces.  If $r=|V|$, then
\[
        \chi(E^\circ)=\chi(E)-r=-r.
\]
By Gauss--Bonnet the complete finite-area hyperbolic metric on $E^\circ$ has area
\[
        \operatorname{Area}(E^\circ)=-2\pi\chi(E^\circ)=2\pi r.
\]
Since every ideal triangle has area $\pi$, the triangulation has exactly
\[
        2r
\]
ideal triangles.

For a Belyi map of degree $d$ there is an even simpler count.  If $\sigma_0,\sigma_1,\sigma_\infty$ are the monodromy permutations and $c(\sigma)$ denotes the number of cycles of $\sigma$, then the points of $V$ are counted by
\[
        r=c(\sigma_0)+c(\sigma_1)+c(\sigma_\infty).
\]
Riemann--Hurwitz in genus one gives $r=d$.  Thus
\[
\boxed{\begin{gathered}
\deg f=d\quad\Longrightarrow\quad |V|=d,\\
2d\ \hbox{ideal triangles}\quad\Longrightarrow\quad
\operatorname{Area}(E^\circ)=2\pi d.
\end{gathered}}
\]
This is an elementary geometric consequence of Belyi's theorem in genus one: a curve defined over $\overline{\Q}$ can be represented by a finite gluing of copies of one ideal triangle, with all the complexity transferred to the way the sides are identified.

This elementary picture was one of the reasons for introducing peels.  A peel opens the finite triangulated surface and displays the gluing data.  After the vertices are removed, the same operation may be regarded hyperbolically: one opens a finite union of ideal triangles and records the side identifications.  The finite drawing therefore retains the combinatorics of the dessin, the topology of the punctured surface, and the covering of the thrice-punctured sphere.

\subsection{The main character: a peel}

Since the peel is the object from which the rest of the paper starts, I record its definition here, before using it further.  This is the definition used in the joint work with Jos\'e Juan-Zacar\'ias, expressed so that the Euclidean, hyperbolic, and ideal cases are treated together.

\begin{definition}[Peel]\label{def:peel-overture}
Let $X$ be a compact oriented surface with a decorated triangulation $\mathcal T$.  Fix one of the following model geometries:
\begin{enumerate}[(i)]
\item the Euclidean equilateral triangle;
\item the hyperbolic equilateral triangle $\Delta_{m,m,m}$ with angles $\pi/m$, where $m>2$ is an integer and the hyperbolic case occurs for $m>3$;
\item the limiting case $m=\infty$, namely the ideal triangle $\Delta_{\infty,\infty,\infty}$, whose angles are zero and whose area is $\pi$.
\end{enumerate}
Let $T(m,m,m)$ denote the corresponding triangular tessellation in the hyperbolic cases.  A \emph{peel} of $X$ is a continuous map
\[
        \Psi:P\longrightarrow X,
\]
where $P$ is a connected polygon formed by finitely many triangles of the chosen model tessellation, such that
\begin{enumerate}[(a)]
\item $\Psi$ sends each triangle of $P$ onto a triangle of $\mathcal T$;
\item the restriction of $\Psi$ to each triangle is an isometry;
\item $\Psi$ preserves the decoration;
\item $\Psi$ is injective on the interior of $P$, and every point of $X$ has at most two preimages under $\Psi$.
\end{enumerate}
If $x,y\in\partial P$ satisfy $\Psi(x)=\Psi(y)$, the relation $x\sim y$ records the corresponding boundary identification.  These identifications are part of the peel data.  For a maximal peel all boundary sides are paired and $P/\!\sim$ recovers $X$.
\end{definition}

Thus the phrase used repeatedly in this paper,
\[
        \boxed{\text{a peel is a finite piece of a developing map}},
\]
is an interpretation of Definition~\ref{def:peel-overture}, not a substitute for it.  The definition is finite and elementary: triangles, isometries, decorations, and side identifications.  It is precisely this finite object that will later be asked to carry arithmetic information.

\subsection{Peels of a decorated equilateral triangulation of an elliptic curve and its modulus $\tau$}

Let $X$ be a Riemann surface obtained from a decorated equilateral triangulation.  Its edges and vertices form a decorated graph.  If
\[
        \Psi:P\longrightarrow X
\]
is a peel, write
\[
        \mathcal G_P:=\Psi(\partial P)\subset X
\]
for the boundary graph of the peel.  The complement $X\setminus\mathcal G_P$ is the image of the interior of $P$; for a maximal peel it is a single open $2$-cell.

Recall that two connected graphs have the same homotopy type if and only if their fundamental groups, which are free groups, have the same rank.  For a finite connected graph $G$ this rank is its first Betti number
\[
        \beta_1(G)=1-\chi(G).
\]

\begin{proposition}[The boundary graph of a peel]\label{prop:peel-bouquet}
Let $E$ be an elliptic curve obtained from a decorated equilateral triangulation and let $\Psi:P\to E$ be a maximal peel.  Then $\mathcal G_P$ has the homotopy type of a bouquet of two circles.  In particular,
\[
        \pi_1(\mathcal G_P,x_0)\cong F_2,
\]
for every $x_0\in\mathcal G_P$, where $F_2$ denotes the free group on two generators.
\end{proposition}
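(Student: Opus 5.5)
We compute the Euler characteristic of $\mathcal G_P$ and use that a finite connected graph is determined up to homotopy by $\beta_1(G)=1-\chi(G)$, as recalled just before the statement. The argument is an Euler-characteristic count on the cell decomposition of $E$ induced by the maximal peel.

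First, $\mathcal G_P$ is a finite graph. Since $P$ is a finite union of triangles of the model tessellation and $\Psi$ maps each triangle isometrically onto a triangle of $\mathcal T$, the set $\Psi(\partial P)$ is a union of edges and vertices of the triangulation $\mathcal T$. So $\mathcal G_P$ is a subcomplex of the $1$-skeleton of $\mathcal T$. It is connected, because $\partial P$ is connected: $P$ is a connected polygon, a closed disk, whose boundary is a circle. More precisely, $P$ is the closure of a connected open region in the plane (or hyperbolic plane) made of triangles. For a maximal peel the interior of $P$ maps injectively onto an open $2$-cell, so $P$ should be a topological disk. This is the point where some care is needed: one must rule out $P$ having holes or pinch points.

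My approach to that point is to use maximality directly. Every boundary side is paired, and $P/\!\sim$ recovers $E$. Condition (d), which allows at most two preimages per point, forces the identifications to be a pairing of boundary edges. I would then argue that if $P$ had a hole, the holed region would contribute a nontrivial loop in $P$. That would be incompatible with $E\setminus\mathcal G_P$ being a single open $2$-cell, which the paper records as true for maximal peels. I will take that cell statement as given, since it is stated in the text just above.

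Next, compute Euler characteristics. The decomposition $E=\mathcal G_P\sqcup(E\setminus\mathcal G_P)$ is a CW structure on $E$. Its $1$-skeleton is $\mathcal G_P$, subdivided by the vertices and edges of $\mathcal T$ lying on it, and it has exactly one $2$-cell. Hence
\[
0=\chi(E)=\chi(\mathcal G_P)+1,
\]
so $\chi(\mathcal G_P)=-1$ and $\beta_1(\mathcal G_P)=1-\chi(\mathcal G_P)=2$.

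A connected finite graph with $\beta_1=2$ is obtained by choosing a spanning tree $T$, which is contractible, and collapsing it. The result is a bouquet of $\beta_1=2$ circles, and the quotient map is a homotopy equivalence. Therefore $\mathcal G_P$ has the homotopy type of $S^1\vee S^1$, and $\pi_1(\mathcal G_P,x_0)\cong F_2$. The choice of $x_0$ is irrelevant because $\mathcal G_P$ is path-connected.

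As a consistency check, $\pi_1(E)\cong\Z^2$ is obtained from $F_2=\pi_1(\mathcal G_P)$ by attaching one $2$-cell. That cell is glued along the boundary word of $P$, which must be a commutator. The main obstacle is justifying that the complement of $\mathcal G_P$ is exactly one open $2$-cell, equivalently that $P$ is a disk and the CW structure is legitimate. If it is not already taken as known, I would derive it from conditions (a)–(d) and maximality as outlined above.
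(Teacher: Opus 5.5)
Your proposal is correct and follows the paper's proof. The paper likewise takes $E\setminus\mathcal G_P$ to be a single open $2$-cell for a maximal peel, computes $0=\chi(E)=v-a+1$ to get $\beta_1(\mathcal G_P)=1-v+a=2$, and concludes that a connected graph with $\beta_1=2$ is homotopy equivalent to a bouquet of two circles. Your added discussion of connectedness and of $P$ being a disk only makes explicit what the paper assumes tacitly; connectedness also follows directly from the CW structure, since a connected CW complex has connected $1$-skeleton.
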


\begin{proof}
Let $v$ and $a$ be the numbers of vertices and edges of $\mathcal G_P$.  Since $\mathcal G_P$ is connected,
\[
        \beta_1(\mathcal G_P)=1-\chi(\mathcal G_P)=1-v+a.
\]
The graph $\mathcal G_P$ together with the unique $2$-cell $E\setminus\mathcal G_P$ gives a cell decomposition of the torus.  Therefore
\[
        0=\chi(E)=v-a+1,
\]
and hence
\[
        \beta_1(\mathcal G_P)=1-v+a=2.
\]
A connected graph of first Betti number $2$ is homotopy equivalent to a bouquet of two circles.
\end{proof}

\begin{remark}\label{rem:genus-g-boundary}
The same argument works for a compact oriented surface of genus $g$.  Since
\[
        \chi(X)=2-2g=v-a+1,
\]
one obtains
\[
        \beta_1(\mathcal G_P)=1-v+a=2g.
\]
Thus $\mathcal G_P$ is homotopy equivalent to a bouquet of $2g$ circles.
\end{remark}

The genus-one case contains more information than the rank count.  The boundary graph already carries all of the first homology of the elliptic curve.

\begin{proposition}[The boundary graph carries $H_1$]\label{prop:peel-H1}
Under the hypotheses of Proposition~\ref{prop:peel-bouquet}, the inclusion
\[
        i:\mathcal G_P\hookrightarrow E
\]
induces an isomorphism
\[
        i_*:H_1(\mathcal G_P,\Z)\xrightarrow{\ \cong\ }H_1(E,\Z).
\]
Consequently the induced homomorphism
\[
        i_\#: \pi_1(\mathcal G_P,x_0)\longrightarrow\pi_1(E,x_0)\cong\Z^2
\]
is surjective.  Hence there are loops $\gamma_1,\gamma_2\subset\mathcal G_P$, based at $x_0$, whose classes generate $\pi_1(E,x_0)$.
\end{proposition}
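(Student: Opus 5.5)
I will prove the isomorphism with the long exact sequence of the pair $(E,\mathcal G_P)$ in integral homology, using that $E\setminus\mathcal G_P$ is a single open $2$-cell, as already used in the proof of Proposition~\ref{prop:peel-bouquet}. Regard $E$ as a CW complex whose $1$-skeleton is $\mathcal G_P$ and which has exactly one $2$-cell $e$: the image of the interior of the polygon $P$, attached along the boundary loop $\Psi|_{\partial P}$. Then
\[
H_k(E,\mathcal G_P;\Z)\cong \tilde H_k(E/\mathcal G_P;\Z)\cong \tilde H_k(S^2;\Z),
\]
so $H_2(E,\mathcal G_P)\cong\Z$ and $H_1(E,\mathcal G_P)=0$.

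The relevant part of the exact sequence is
\[
0=H_2(\mathcal G_P)\to H_2(E)\xrightarrow{\ j\ } H_2(E,\mathcal G_P)\xrightarrow{\ \partial\ } H_1(\mathcal G_P)\xrightarrow{\ i_*\ } H_1(E)\to H_1(E,\mathcal G_P)=0 .
\]
Here $H_2(\mathcal G_P)=0$ because $\mathcal G_P$ is a graph. Exactness on the right gives that $i_*$ is surjective. For injectivity I need $\partial=0$, equivalently that $j:H_2(E)\cong\Z\to H_2(E,\mathcal G_P)\cong\Z$ is an isomorphism. This is the step needing the most care. I would argue it in two ways.

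\emph{Via the fundamental class.} The fundamental class of the closed oriented torus is represented by the $2$-cell $e$ with its orientation, so $j[E]$ generates the relative group.

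\emph{Via ranks.} Proposition~\ref{prop:peel-bouquet} gives $H_1(\mathcal G_P)\cong\Z^2$, and $H_1(E)\cong\Z^2$. A surjection $\Z^2\to\Z^2$ is an isomorphism, since a surjective endomorphism of a finitely generated module over a commutative ring is injective. This second route avoids orientation bookkeeping, so I will use it as the main argument and mention the first as the geometric reason. In cellular terms: the boundary of $e$ traverses each edge of $\mathcal G_P$ twice with opposite orientations, because the side pairings of a maximal peel of an oriented surface reverse the induced boundary orientation. Hence the cellular boundary $\partial e=0$.

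For the $\pi_1$ statement, I will use that the Hurewicz map $\pi_1(E,x_0)\to H_1(E)$ is an isomorphism, because $\pi_1(E,x_0)\cong\Z^2$ is abelian. The square formed by $i_\#$, $i_*$ and the two Hurewicz maps commutes. The Hurewicz map $\pi_1(\mathcal G_P,x_0)\to H_1(\mathcal G_P)$ is abelianization, hence surjective. Therefore
\[
\mathrm{Hur}_E\circ i_\#=i_*\circ\mathrm{Hur}_{\mathcal G_P}
\]
is surjective, and so is $i_\#$. Finally, take $\gamma_1,\gamma_2$ to be loops in $\mathcal G_P$ based at $x_0$ representing the free generators of $\pi_1(\mathcal G_P,x_0)\cong F_2$, for example the two petals of the bouquet transported through the homotopy equivalence. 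Their images generate $\pi_1(E,x_0)$.

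The only genuinely delicate point is justifying that $(E,\mathcal G_P)$ is a CW pair with a single $2$-cell. This follows from Definition~\ref{def:peel-overture}(d) and maximality: $\Psi$ is injective on the interior of $P$, and all boundary sides are paired so that $P/\!\sim\;\cong E$. I will take this as the same input already used in Proposition~\ref{prop:peel-bouquet}.
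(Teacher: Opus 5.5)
Your proof is correct and follows the paper's argument. The surjectivity of $i_*$ comes from $E$ being obtained from $\mathcal G_P$ by attaching a single $2$-cell: you phrase this as $H_1(E,\mathcal G_P)=0$ in the exact sequence of the pair, while the paper uses cellular approximation. Injectivity then follows because a surjection $\Z^2\to\Z^2$ is an isomorphism, and surjectivity of $i_\#$ follows by passing through abelianization (your Hurewicz square); the fundamental-class and cellular-boundary remarks are valid but not needed.
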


\begin{proof}
The torus $E$ is obtained from its $1$-skeleton $\mathcal G_P$ by attaching one $2$-cell.  Hence the inclusion of the $1$-skeleton induces a surjection
\[
        i_*:H_1(\mathcal G_P,\Z)\twoheadrightarrow H_1(E,\Z).
\]
For example, this follows directly from cellular approximation: every loop in the CW complex $E$ is homotopic to a loop in its $1$-skeleton.  By Proposition~\ref{prop:peel-bouquet},
\[
        H_1(\mathcal G_P,\Z)\cong\Z^2,
\]
while $H_1(E,\Z)\cong\Z^2$ because $E$ is a torus.  A surjective homomorphism between free abelian groups of the same finite rank is an isomorphism.  Therefore $i_*$ is an isomorphism.

Since $\pi_1(E)\cong\Z^2$ is abelian, the homomorphism $i_\#$ factors through the abelianization $H_1(\mathcal G_P,\Z)$.  The isomorphism just proved shows that $i_\#$ is surjective.  We may therefore choose loops $\gamma_1,\gamma_2\subset\mathcal G_P$ whose images generate $\pi_1(E)$.
\end{proof}

We shall also use the standard correspondence between the fundamental group and the deck group.  If
\[
        p:(\widetilde X,\widetilde x_0)\longrightarrow(X,x_0)
\]
is the universal covering, then every class $[\gamma]\in\pi_1(X,x_0)$ determines the unique deck transformation $g_\gamma$ satisfying
\[
        g_\gamma(\widetilde x_0)=\widetilde\gamma(1),
\]
where $\widetilde\gamma$ is the lift of $\gamma$ beginning at $\widetilde x_0$.  This gives the usual isomorphism
\[
        \pi_1(X,x_0)\cong\Deck(\widetilde X/X).
\]

From this point through the discussion of the deck lattice below, $E$ denotes an elliptic curve.  When an ideal triangulation is used, it is understood on the punctured surface $E^\circ=E\setminus V$, while the boundary graph and the homology statements are taken in the compact elliptic curve $E$ obtained by filling the punctures.

The preceding elementary facts recover a modulus directly from a peel.

\begin{theorem}[The peel and a modulus]\label{thm:peel-modulus-overture}
Let $E$ be an elliptic curve obtained from a decorated equilateral triangulation, let
\[
        p:\C\longrightarrow E
\]
be its holomorphic universal covering, and let $\Psi:P\to E$ be a hyperbolic peel.  This includes the ideal model in the sense explained above: the ideal triangulation lives on $E^\circ$, and the boundary graph is considered after compactifying to $E$.  Then there are loops $\gamma_1,\gamma_2\subset\mathcal G_P$ such that the associated deck transformations are translations
\[
        g_{\gamma_1}(z)=z+\omega_1,
        \qquad
        g_{\gamma_2}(z)=z+\omega_2,
\]
and generate $\Deck(p)$.  After ordering the generators so that
\[
        \tau:=\frac{\omega_1}{\omega_2}\in\mathbb H,
\]
one has
\[
        E\cong \C/(\Z\omega_1+\Z\omega_2)
        \cong \C/(\Z+\Z\tau).
\]
Thus the peel boundary graph contains loops from which a modulus $\tau$ of $E$ is recovered.
\end{theorem}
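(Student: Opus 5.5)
The plan is to combine Proposition~\ref{prop:peel-H1} with the deck-group correspondence recalled above and the standard description of deck transformations of $\C\to E$. First I will check that the hypotheses of Proposition~\ref{prop:peel-H1} hold in the hyperbolic and ideal settings. The peel is maximal, so $P/\!\sim$ recovers the surface. In the ideal case the triangulation lives on $E^\circ$, and after filling the punctures the boundary graph $\mathcal G_P$ together with the single open cell $\Psi(\operatorname{int}P)$ gives a CW decomposition of the compact torus $E$. This holds because the ideal vertices of $P$ map to the finitely many points of $V$, which become vertices of $\mathcal G_P$. The Euler characteristic count of Proposition~\ref{prop:peel-bouquet} then applies verbatim, and Proposition~\ref{prop:peel-H1} yields loops $\gamma_1,\gamma_2\subset\mathcal G_P$ based at $x_0$ whose classes generate $\pi_1(E,x_0)\cong\Z^2$.

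Next I will transport these loops to deck transformations. Fix $\widetilde x_0\in p^{-1}(x_0)\subset\C$. The isomorphism $\pi_1(E,x_0)\cong\Deck(p)$, $[\gamma]\mapsto g_\gamma$, carries a generating set to a generating set, so $g_{\gamma_1},g_{\gamma_2}$ generate $\Deck(p)$. I then need that every deck transformation of the holomorphic covering $p:\C\to E$ is a translation. A deck transformation $g$ is a biholomorphism of $\C$, hence $g(z)=az+b$ with $a\neq 0$. It acts freely, since $\Deck(p)$ acts freely on the universal cover, so it has no fixed point, which forces $a=1$. Writing $g_{\gamma_j}(z)=z+\omega_j$, the relation $g_{\gamma_j}(\widetilde x_0)=\widetilde\gamma_j(1)$ gives concretely $\omega_j=\widetilde\gamma_j(1)-\widetilde x_0$. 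These are the displacements one reads off by developing the boundary loops through the peel.

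Finally, $\Lambda=\Z\omega_1+\Z\omega_2$ is the image of $\Deck(p)\cong\Z^2$ under $g\mapsto g(0)$. This map is injective, since a translation is determined by its value at $0$. Because $\Deck(p)$ acts properly discontinuously with compact quotient, $\Lambda$ is a discrete cocompact subgroup of $\C$. Hence $\omega_1,\omega_2$ are $\R$-linearly independent, and $\operatorname{Im}(\omega_1/\omega_2)\neq 0$. After replacing $\gamma_1$ by its inverse if necessary, $\tau=\omega_1/\omega_2\in\mathbb H$. Then $E\cong\C/\Deck(p)=\C/\Lambda$, and multiplication by $\omega_2^{-1}$ gives $\C/\Lambda\cong\C/(\Z+\Z\tau)$.

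The main obstacle is the first step in the ideal case. The peel is hyperbolic and lives on $E^\circ$, while the torus statements concern the compactification $E$ with its flat universal cover $\C$. One must check carefully that the closure of $\Psi(\partial P)$ in $E$ is a finite connected graph whose complement is a single open disk, with the punctures as vertices; that is exactly what the CW argument needs. I would verify this directly from conditions (a)--(d) of Definition~\ref{def:peel-overture}: $\Psi$ is injective on the interior, so the complement is the image of an open disk, and boundary sides are paired by maximality. The hyperbolic metric plays no further role, since only the topology of $\mathcal G_P\subset E$ enters.
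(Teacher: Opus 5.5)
Your proposal is correct and follows the paper's own route. Proposition~\ref{prop:peel-H1} gives generating loops in $\mathcal G_P$, the $\pi_1$--deck isomorphism turns them into generators of $\Deck(p)$, and fixed-point-freeness forces each affine map $z\mapsto az+b$ to be a translation; one then orients the basis and rescales by $\omega_2$. Your extra checks fill in points the paper leaves implicit: making explicit the maximality hypothesis that the statement omits but the appeal to Proposition~\ref{prop:peel-H1} needs, checking in the ideal case that the closure of $\Psi(\partial P)$ in $E$ is a connected $1$-skeleton with the punctures as vertices, and deriving the $\R$-linear independence of $\omega_1,\omega_2$ from discreteness and cocompactness.
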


\begin{proof}
By Proposition~\ref{prop:peel-H1}, choose $\gamma_1,\gamma_2\subset\mathcal G_P$ whose images generate $\pi_1(E)\cong\Z^2$.  Through the isomorphism between $\pi_1(E)$ and the deck group of the universal cover, they give two generators of $\Deck(p)$.  A holomorphic automorphism of $\C$ has the form $z\mapsto az+b$ with $a\neq0$; if $a\neq1$ it has a fixed point.  Since a nontrivial deck transformation acts without fixed points, every nontrivial deck transformation is a translation.  Hence the two generators have the displayed form for some linearly independent periods $\omega_1,\omega_2$.  Rescaling by $\omega_2$ gives the lattice $\Z+\Z\tau$, with $\tau\in\mathbb H$ after choosing the orientation of the basis.
\end{proof}

The number $\tau$ obtained in the theorem is not canonical as a point of $\mathbb H$.  Changing the oriented basis of the deck lattice replaces it by $(a\tau+b)/(c\tau+d)$ for a matrix in $\SL_2(\Z)$.  Thus the peel determines a modulus representative, or equivalently the point of $\mathbb H/\SL_2(\Z)$ corresponding to the elliptic curve.

This is the elementary topological content of the peel construction.  Before any Tate module or Hecke operator appears, the peel has already exhibited a free group $F_2$, its abelian quotient $\Z^2$, the universal cover, the deck lattice, and a modulus representative of the elliptic curve.  The arithmetic questions in the second part of the paper ask how much more can be recovered when the triangulation and its covering data are defined over number fields.

There is another elementary feature which will matter later.  An ideal triangulation is not isolated.  If the punctures are kept fixed, one may pass from one ideal triangulation to another by successive diagonal flips: two adjacent triangles form an ideal quadrilateral, and one replaces its diagonal by the other one.  Thus the same punctured elliptic curve carries a whole family of triangulations related by very simple local moves.

There is also a different operation: refinement.  In the Euclidean equilateral picture one may join the midpoints of the sides of each triangle and replace it by four equilateral triangles of half the side length.  Repeating the procedure produces finer and finer triangulations.  On an elliptic curve this midpoint subdivision is closely related to pulling a triangulation back by multiplication maps
\[
        [n]:E\longrightarrow E,
\]
whose degree is $n^2$.  For $n=2$ the fourfold subdivision is already visible in one triangle.  After the new vertices are removed, the puncture set has changed, so refinement is different from a flip; nevertheless both operations belong to the same elementary geometry of triangles on the torus.

These two moves suggest a question which is deliberately stated before introducing any Galois representation, Tate module, Hecke operator, or Brandt module:

\begin{quote}
Given two finite ideal triangulations associated with the same elliptic curve, what information is preserved under flips and refinements, and what information depends on the particular triangulation?
\end{quote}

For Belyi triangulations there is an arithmetic version of the question.  Belyi's theorem says that the existence of such a finite Belyi triangulation characterizes curves defined over $\overline{\Q}$.  In genus one one may therefore ask:

\begin{quote}
What property of the ideal triangulation, of its side-pairings, or of its peel distinguishes those elliptic curves which descend from $\overline{\Q}$ to $\Q$?
\end{quote}

For the underlying elliptic curve this is a question about descent of its complex structure.  By the Belyi pair I mean the pair $(E,f)$; asking that this pair descend is stronger, because the map $f$ itself must also descend.  The point here is not to answer the question at once, but to formulate it in the language of a finite geometric object.

The same picture leads directly to modular curves.  The modular curve $X_0(N)$ has a natural map
\[
        j_N:X_0(N)\longrightarrow X(1)\simeq\PP^1
\]
branched only over the two elliptic values and the cusp.  After normalization of these three values to $0,1,\infty$, this is again a Belyi map.  Put
\[
        X_0(N)^\circ:=X_0(N)\setminus j_N^{-1}\{0,1,\infty\}.
\]
Then $j_N$ restricts to an unramified covering of the thrice-punctured sphere, and $X_0(N)^\circ$ is tiled by ideal triangles coming from the Farey tessellation.  Thus an elliptic Belyi curve and a modular curve are both finite triangulated coverings of the same thrice-punctured sphere:
\[
\begin{array}{ccc}
E^\circ & \longrightarrow & \PP^1\setminus\{0,1,\infty\},\\[2mm]
X_0(N)^\circ & \longrightarrow & \PP^1\setminus\{0,1,\infty\}.
\end{array}
\]
On every ideal triangle the covering map is the standard isometry onto one of the two ideal triangles downstairs.

Consequently one can form a common finite cover by taking the fiber product and normalizing.  Geometrically, one obtains a triangulated surface which maps triangle by triangle to a finite cover of the modular curve and to a finite cover of the elliptic curve.  This construction is elementary and automatic for two Belyi coverings.  It therefore cannot by itself characterize modularity.  That observation is important: the existence of a common triangulated cover is not the theorem we seek.

\subsection{The second main character: the Modularity Theorem}

At this point the second principal object of the paper should be stated explicitly.  The first is the peel.  The second is the Modularity Theorem for elliptic curves over $\Q$.

\begin{theorem}[Modularity Theorem: geometric and arithmetic forms]\label{thm:modularity-overture}
Let $E/\Q$ be an elliptic curve of conductor $N$.  Then there exists a nonconstant morphism defined over $\Q$
\[
        \varphi_E:X_0(N)\longrightarrow E,
\]
where $X_0(N)$ is the compactified modular curve attached to the congruence subgroup
\[
\Gamma_0(N)=\left\{\begin{pmatrix}a&b\\ c&d\end{pmatrix}\in\SL_2(\Z):c\equiv0\pmod N\right\}.
\]  If $\omega_E$ is a nonzero invariant differential on $E$, then for the normalized weight-$2$ newform
\[
        f_E(z)=\sum_{n\geq1}a_n(f_E)q^n,
        \qquad q=e^{2\pi iz},
\]
associated with $E$, one has
\[
        \varphi_E^*\omega_E
        =\kappa_E\,2\pi i\,f_E(z)\,dz
\]
for some nonzero rational constant $\kappa_E$ after a choice of rational invariant differential.  Equivalently, the Hasse--Weil $L$-function of $E$ is the $L$-function of $f_E$:
\[
        \boxed{L(E,s)=L(f_E,s).}
\]
In particular, for every prime $p\nmid N$,
\[
        \boxed{a_p(E)=p+1-\#E(\F_p)=a_p(f_E),}
\]
and the local Euler polynomial is
\[
        1-a_p(E)T+pT^2
        =1-a_p(f_E)T+pT^2.
\]
At the primes dividing $N$, the corresponding local Euler factors agree as part of the equality of the two $L$-functions.
\end{theorem}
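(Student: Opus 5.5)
The plan follows the standard route of Wiles, Taylor--Wiles and Breuil--Conrad--Diamond--Taylor. Nothing in the peel formalism of Definition~\ref{def:peel-overture} or Theorem~\ref{thm:peel-modulus-overture} is strong enough to replace it; as observed above, a common triangulated cover of $E^\circ$ and $X_0(N)^\circ$ always exists and so carries no information about modularity. The first step is to reduce all the forms of the statement to a single one, the existence of a weight-$2$ newform $f$ with $a_q(f)=a_q(E)$ for almost all primes $q$.

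\textbf{Reduction to the almost-all-primes statement.} Assume such an $f$ exists. The Eichler--Shimura construction attaches to $f$ an abelian variety quotient $A_f$ of $J_0(N_f)$. Since the coefficients $a_q(f)$ are rational integers, $A_f$ is an elliptic curve over $\Q$. The Eichler--Shimura congruence relation shows that $\rho_{A_f,\ell}$ and $\rho_{E,\ell}$ have the same Frobenius traces at almost all primes. Chebotarev density then shows that the semisimplifications of these two representations agree, and both are semisimple by Faltings. Faltings' isogeny theorem now gives a $\Q$-isogeny $A_f\to E$. Composing with $X_0(N_f)\to J_0(N_f)\to A_f$ produces $\varphi_E$. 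Carayol's theorem identifies $N_f$ with the conductor $N$ of $\rho_{E,\ell}$. Hence the local factors, including those at the primes dividing $N$, agree, and so $L(E,s)=L(f_E,s)$. The pullback $\varphi_E^*\omega_E$ is a holomorphic differential on $X_0(N)$ lying in the $f$-isotypic Hecke eigenspace. It is therefore a multiple of $2\pi i f_E(z)\,dz$, and the multiple is rational because both differentials have rational $q$-expansions. This multiple is the constant $\kappa_E$.

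\textbf{Residual modularity.} Next I show that some residual representation $\bar\rho_{E,\ell}$ is modular. Take $\ell=3$ first. If $\bar\rho_{E,3}$ is irreducible, its image lies in $\GL_2(\F_3)$, which is solvable and lifts to $\GL_2(\Z[\sqrt{-2}])$. The Langlands--Tunnell theorem then gives a weight-one form, and multiplying by a suitable Eisenstein series congruent to $1$ and applying level lowering (Ribet) gives a weight-$2$ form congruent to $\rho_{E,3}$. If $\bar\rho_{E,3}$ is reducible, or irreducible but not absolutely irreducible on the decomposition group needed for patching, I would use Wiles' $3$--$5$ switch. Choose an auxiliary curve $E'$ with $E'[5]\cong E[5]$ and $\bar\rho_{E',3}$ irreducible; this is possible because $X(5)$ twisted by $E[5]$ has genus $0$ and infinitely many rational points. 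Prove $E'$ modular, and transfer residual modularity to $E$ at $\ell=5$. The case where both $\bar\rho_{E,3}$ and $\bar\rho_{E,5}$ are reducible is impossible, since $X_0(15)(\Q)$ consists only of cusps and finitely many points corresponding to curves handled directly.

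\textbf{Modularity lifting (main obstacle).} The hardest step is the lifting theorem: a geometric $\ell$-adic lift of a modular, suitably irreducible $\bar\rho$ is itself modular. I would prove this in the form $R=\mathbb T$. Let $R$ be the universal deformation ring with local conditions matching $\rho_{E,\ell}$ (ordinary or flat at $\ell$, minimally ramified elsewhere), and let $\mathbb T$ be the localized Hecke algebra. First treat the minimal case by Taylor--Wiles patching: add auxiliary primes $Q$, chosen by Chebotarev so that the dual Selmer group is killed, and patch the Hecke modules over $\Z_\ell[[y_1,\dots,y_r]]$ to obtain a free module. Then pass from minimal to general level with Wiles' numerical criterion, comparing congruence ideals via Ihara's lemma. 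For non-semistable $E$ the local deformation conditions at $3$ become potentially Barsotti--Tate, and the argument requires the Breuil--Conrad--Diamond--Taylor analysis of these local rings, which is where I expect the real difficulty. Once $R=\mathbb T$ is established, $\rho_{E,\ell}$ corresponds to a point of $\mathbb T$, hence to a newform $f$. The reduction in the first paragraph then completes the proof.
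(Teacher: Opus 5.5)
The paper does not prove this statement. It quotes it as the theorem of Wiles and Breuil--Conrad--Diamond--Taylor, and your outline follows that route. Your first paragraph is sound, with one addition. Agreement $a_q(f)=a_q(E)$ for only almost all $q$ gives $\dim A_f=1$ through strong multiplicity one: every Galois conjugate of $f$ is a newform of the same level agreeing with $f$ at almost all primes, hence equals $f$, so all $a_n(f)\in\Q$.

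The genuine gap is in your residual case analysis.
\begin{itemize}
\item \emph{The hypothesis is misidentified.} The Taylor--Wiles hypothesis is not a condition on a decomposition group. It is absolute irreducibility of $\bar\rho_{E,3}|_{G_{\Q(\sqrt{-3})}}$ (resp.\ $\bar\rho_{E,5}|_{G_{\Q(\sqrt5)}}$), a global condition used to produce the auxiliary primes by Chebotarev.
\item \emph{The $3$--$5$ switch is not justified.} The switch needs $\bar\rho_{E,5}|_{G_{\Q(\sqrt5)}}$ to be absolutely irreducible for $E$ itself. You only exclude the case where $\bar\rho_{E,3}$ and $\bar\rho_{E,5}$ are both reducible. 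That case is not ``impossible'' either: the four noncuspidal rational points of $X_0(15)$ give curves of conductor $50$, which must be checked separately.
\item \emph{Why this is enough only for semistable $E$.} Semistability forces an irreducible $\bar\rho_{E,\ell}$, for $\ell=3,5$, to remain absolutely irreducible over $\Q(\sqrt{-3})$, resp.\ $\Q(\sqrt5)$. This is why Wiles needs only $X_0(15)$.
\item \emph{A counterexample in general.} Suppose the image of $\bar\rho_{E,3}$ is the normalizer of a split Cartan subgroup of $\GL_2(\F_3)$. I expect this to occur for infinitely many $j$, since the relevant modular curve $X_s^+(3)$ has genus zero. Then $\bar\rho_{E,3}$ is irreducible. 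But the image of $G_{\Q(\sqrt{-3})}$ is its intersection with $\SL_2(\F_3)$, a cyclic group of order $4$, so the restriction is absolutely reducible. I believe the analogous dihedral situation can also occur at $5$.
\end{itemize}

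To close the argument you must do the following.
\begin{itemize}
\item Show that curves failing both quadratic-restriction conditions give rational points on a finite list of level-$15$ modular curves, with a Borel or Cartan-normalizer structure at each of $3$ and $5$, not just $X_0(15)$.
\item Show that these curves have only finitely many rational points, and verify directly that the resulting $j$-invariants are modular.
\item For the last step of the switch, when $E$ has additive reduction at $5$, invoke a potentially Barsotti--Tate lifting theorem at $5$, as in Conrad--Diamond--Taylor. This is needed in addition to the wild $3$-adic analysis you mention.
\end{itemize}
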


This theorem is the point at which the geometric and arithmetic forms of modularity meet.  Geometrically, it gives an actual morphism from the modular curve to the elliptic curve.  Arithmetically, it identifies the Frobenius traces of $E$ with the Hecke eigenvalues of a weight-$2$ newform.  These are two expressions of the same theorem, not two unrelated facts \cite{Wiles,BCDT}.

The geometric form also explains why triangulated surfaces occur naturally in the present problem.  The punctured modular curve and the punctured Belyi elliptic curve are both built from ideal triangles over the thrice-punctured sphere, while modularity supplies the additional map
\[
        X_0(N)\longrightarrow E.
\]
The elementary common triangulated cover described above is automatic; the modular parametrization is not.  The problem is therefore to understand what extra structure, visible in the triangulations, their peels, or their finite refinements, distinguishes the modular situation.

The question which motivated the present paper is sharper:

\begin{quote}
What additional property of this elementary triangulated correspondence distinguishes the case in which the elliptic curve is modular, that is, the case in which there is a nonconstant holomorphic map
\[
        X_0(N)\longrightarrow E?
\]
Can that additional property be recognized, or even constructed, from peels, their refinements, and their finite covering data?
\end{quote}

This is the geometric form of the problem.  It is close in spirit to the fascination behind dessins d'enfants: an elementary finite drawing may carry arithmetic information which is not elementary at all.  The first part of the paper will therefore stay as long as possible with triangles, coverings, punctures, peels, flips, refinements, and deck transformations.  Only after this geometric picture has been developed will the arithmetic structures be introduced.  The Tate module, Hecke correspondences, Bruhat--Tits trees, Brandt modules, and CM packets are then introduced to ask which additional arithmetic conditions distinguish the modular case.

\section{Introduction: from Belyi to dessins, and from dessins to peels}

The starting point is Belyi's theorem.  A compact Riemann surface is an analytic object: locally it is described by holomorphic coordinates, and globally it belongs to a moduli space of complex structures.  Nevertheless, Belyi discovered that the surfaces which can be defined over $\overline{\Q}$ are exactly those which admit a meromorphic function
\[
        f:X\longrightarrow \PP^1
\]
ramified over at most three points, which may be taken to be $0,1,\infty$ \cite{Belyi}.  Thus an arithmetic condition on an algebraic curve is equivalent to the existence of a branched covering of the sphere with only three prescribed branch values.  I shall call such a pair $(X,f)$ a \emph{Belyi pair}; when both $X$ and $f$ are defined over $\Q$, I shall call it a \emph{rational Belyi pair}.

Grothendieck immediately understood that this theorem was saying much more than its formal statement.  In his \emph{Esquisse d'un programme} he proposed replacing the algebraic curve, for many purposes, by the finite combinatorial object
\[
        \mathcal D=f^{-1}([0,1]),
\]
a bipartite graph embedded in the oriented surface.  He called these objects \emph{dessins d'enfants} \cite{Esquisse}.  The terminology is elementary, but the idea is not.  A dessin remembers the topological covering, its monodromy, the complex structure selected by the covering, and an action of the absolute Galois group
\[
        G_\Q:=\Gal(\overline{\Q}/\Q).
\]
A central feature of Grothendieck's vision is this coexistence of two descriptions: on one side algebraic curves and Galois theory, and on the other side finite graphs drawn on surfaces.

There is a geometric form of the same phenomenon which is especially close to the present paper.  If one decomposes the sphere into two congruent triangles with vertices $0,1,\infty$ and pulls this decomposition back by a Belyi map, the surface acquires a triangulation by congruent triangles.  In the Euclidean equilateral case this point of view was made explicit by Voevodsky and Shabat: a Riemann surface admits the conformal structure coming from an equilateral triangulation exactly in the arithmetic situation predicted by Belyi's theorem \cite{VS}.  Their short 1989 paper was followed by the broader 1990 article \emph{Drawing curves over number fields}, written for the Grothendieck Festschrift \cite{SVDrawing}.  The title expresses perfectly the philosophy: arithmetic curves can literally be drawn.

This circle of ideas was the starting point of my joint work with Jos\'e Juan-Zacar\'ias \cite{JV}; see also his doctoral thesis \cite{JuanThesis}, written under my supervision, especially the chapter on equilateral triangulations, Belyi functions, peels, developing maps, and holonomy.  We studied decorated equilateral triangulations and Belyi functions and introduced a construction which we called a \emph{peel}.  At the end of that work we asked which properties of a dessin characterize elliptic curves defined over $\Q$ and which characterize modular elliptic curves.  The thesis records the possibility of approaching the Taniyama--Shimura theorem through dessins as one of our shared dreams.  The name ``peel'' came from the elementary picture of cutting a triangulated surface until it opens like the peel of an orange.  But there is a better geometric description, and it is the one I shall use here:

There is a second source of motivation which at first seems rather far from Belyi theory.  In my survey on low-dimensional solenoidal manifolds \cite{VerjovskySolenoids}, and more recently in joint work with Fernando Alcalde Cuesta, \`Alvaro Carballido Costas and Matilde Mart\'inez on horocyclic dynamics of hyperbolic solenoidal surfaces of finite type \cite{ACMV}, inverse limits of towers of finite coverings play a basic role.  A solenoid keeps, in one compact laminated space, the information carried by all levels of a covering tower; its Cantor transversal records the corresponding profinite structure.  This suggested to me that one should not ask only what a single peel remembers.  One should also follow it through natural finite covers and then pass to an inverse limit.  Fix a prime $\ell$.  Iterating the multiplication covering $[\ell]:E\to E$ gives
\[
\cdots \xrightarrow{[\ell]} E \xrightarrow{[\ell]} E \xrightarrow{[\ell]} E,
\]
and, after composing the Belyi map with $[\ell^n]$, the compatible Belyi pairs
\[
        (E,f\circ[\ell^n])_{n\geq0}.
\]
I shall call this the \emph{multiplication-covering tower}, or simply the multiplication tower when there is no danger of confusion.  The inverse limit of the covering spaces themselves,
\[
\mathcal S_\ell(E):=\varprojlim\bigl(E\xleftarrow{[\ell]}E\xleftarrow{[\ell]}E\xleftarrow{[\ell]}\cdots\bigr),
\]
is an $\ell$-adic solenoidal lamination over $E$.  The fiber over the origin is canonically
\[
\varprojlim_n E[\ell^n]=T_\ell(E)\cong\Z_\ell^2,
\]
so the Tate module appears geometrically as the profinite transversal of this solenoidal inverse limit.  When $E$ is defined over $\Q$, Galois acts compatibly on these finite fibers and hence on the transversal.  Thus the solenoidal viewpoint does not replace the Tate module; it places it inside the inverse-limit geometry of the covering tower.

\begin{quote}
\emph{A peel is a finite piece of a developing map.  Here the developing map is obtained by continuing the Euclidean triangular charts on the simply connected geometric cover; away from the lifted cone points it is locally an isometry into the Euclidean model, and the ambiguity created by continuation around loops downstairs is measured by the holonomy representation.}
\end{quote}

Indeed, a peel consists of a simply connected finite union $P$ of model triangles and a map
\[
        \Psi:P\longrightarrow X
\]
which is an isometry on every triangle and respects the decoration.  The boundary of $P$ records the identifications needed to reconstruct the surface.  If one kept unfolding triangles indefinitely in the corresponding simply connected geometric cover, one would obtain the usual developing map.  The peel is a finite developed domain adapted to the given finite triangulation.

This formulation records simultaneously the topology, the local Euclidean geometry, and the holonomy of the triangulation.  For a torus, the image of the boundary graph of a peel has the homotopy type of a bouquet of two circles, and therefore represents the two basic homology directions.  More is true.  The peel together with its boundary identifications reconstructs the universal cover of the torus, endowed with the pullback of the singular flat metric.  The deck group---that is, the group of covering transformations---$\pi_1(E)\cong\Z^2$ acts by isometries on the reconstructed universal cover.  Thus a finite peel already contains a periodic infinite geometric object.  In the simplest hexagonal situation the two deck directions are also the period directions of the flat torus.  In general, cone singularities introduce rotational holonomy, and a Euclidean displacement seen in one developed copy need not be a period of the holomorphic one-form of the elliptic curve.  This distinction will be important below.

The first aim of this paper is to put this observation in a precise form.  The singular equilateral metric is naturally described by the cubic differential
\[
        q_f=\frac{(df)^3}{f^2(f-1)^2}.
\]
The peel is a finite developing map for the geometry defined by this differential.  Passing to its canonical cubic-root cover removes the rotational part of the holonomy and produces a translation surface.  In this way the combinatorics of the dessin, the geometry of the peel, and the analytic geometry of differentials fit together without confusing the singular flat metric with the uniformizing complex torus.

The second aim is arithmetic and considerably more ambitious.  The original work with Jos\'e Juan-Zacar\'ias already ended with questions concerning elliptic curves and the Taniyama--Shimura--Weil theorem.  Our question was whether the dessin and the peel contain arithmetic information that distinguishes elliptic curves defined over $\Q$ and, among them, the modular situation.  A representative $\tau$ of the complex isomorphism class alone cannot determine the conductor of an elliptic curve over $\Q$: quadratic twists have the same complex torus and different arithmetic.  By an \emph{arithmetic Belyi pair} I mean a Belyi pair $(X,f)$ defined over a number field (in the applications below, over $\Q$).  Such a pair, together with its Galois action and its system of peels, carries much more structure.

Following that idea leads to two natural operations.  The first is the multiplication-covering tower just described: the maps $[\ell^n]:E\to E$ are finite unramified coverings, and after composition with the Belyi map their relative deck groups are $E[\ell^n]$.  Passing to the inverse limit gives
\[
T_\ell(E):=\varprojlim_n E[\ell^n].
\]
When the Belyi pair is defined over $\Q$, the natural Galois action on every torsion group is compatible with the transition maps and therefore gives the usual action on $T_\ell(E)$.  This has an immediate arithmetic consequence which I want to display rather than leave implicit.  Once the Galois representation on $T_\ell(E)$ is known, its inertia action at a prime $p\neq\ell$ determines the local conductor exponent $f_p(E)$, namely the exponent of $p$ in the conductor $N_E$, while Frobenius at a good prime gives $a_p(E)$.  Here a \emph{good prime} means a prime of good reduction for $E$; a \emph{bad prime} is a prime of bad reduction.  Varying the auxiliary prime $\ell$, the family of multiplication-covering towers, with their compatible Galois actions, therefore determines
\[
N_E=\prod_p p^{f_p(E)},
\]
together with the good Euler factors
\[
1-a_p(E)T+pT^2.
\]
Thus the conductor is determined by the multiplication-covering tower together with its Galois action.  The same tower also records the split/nonsplit local characters at multiplicative primes and, through all the Frobenius traces, the Hasse--Weil $L$-function, that is, the Euler product assembled from these local factors.  Cyclic quotients, on the other hand, produce a system of correspondences on peels with the local geometry of the Bruhat--Tits tree and the algebra of Hecke correspondences.  At a prime $p$ of bad reduction this leads naturally to the finite supersingular Brandt module, namely the degree-zero module generated by isomorphism classes of supersingular elliptic curves in characteristic $p$ (those whose geometric $p$-torsion has no nontrivial points), equipped with the isogeny-counting Brandt operators defined below.  For $q\neq p$, I write $B_q$ for the operator obtained by summing over cyclic isogenies of degree $q$ from each supersingular class.

Thus two apparently different finite pictures emerge from the same elliptic curve.  One contains Frobenius traces,
\[
        a_q(E)=q+1-\#E(\F_q),
\]
and the other carries Brandt operators $B_q$.  The problem is to determine whether these two constructions can be related directly.  The desired identity is
\[
        B_qc_E=a_q(E)c_E
\]
for a nonzero class $c_E$ in the appropriate supersingular module.  In the prime semistable case this reciprocity statement is essentially equivalent to modularity itself.  I shall therefore not disguise the difficulty: this is exactly where the great theorem enters.

The present work does not give a new proof of modularity.  It approaches the modularity question through explicit geometric constructions, records why several direct attempts fail, and studies a construction that does not begin with a modular form.  We shall call the resulting signed supersingular reduction divisors \emph{oriented CM packets}; here CM means complex multiplication.  They are obtained from CM elliptic curves while retaining the local orientation at the conductor prime, and are attached to the Frobenius discriminants
\[
        D_q=a_q(E)^2-4q.
\]
The conductor-$37$ calculation at the end of the paper gives one explicit comparison.  A Frobenius discriminant coming directly from the elliptic curve produces, through oriented CM reduction, the same one-dimensional subspace of the degree-zero supersingular module at $37$ that is also singled out by the local reduction at $37$.  Before reaching that example I isolate a more general point: nonsplit residual Frobenius elements supply a positive-density family of admissible quadratic orders, their discriminants cannot remain bounded, and a Frobenius-conjugate supersingular pair gives a concrete nonvanishing criterion for the anti-invariant packet.

The starting object is still the finite drawing on the surface.  After unfolding the peel one sees the periods and the holonomy.  If the arithmetic structure is retained, the same construction leads to Galois representations, while cyclic isogenies bring in the Hecke operators.  The final question is whether these finite constructions already contain the reciprocity law which modularity asserts.

\paragraph{Organization.}
The geometric part first treats the peel as a finite developing map, the cubic differential of the equilateral structure, and reconstruction of the universal cover and deck lattice.  The arithmetic part then passes from the finite unramified multiplication coverings to their solenoidal inverse limit and its profinite transversal $T_\ell(E)$, and from cyclic quotients to the local lattice correspondences underlying the Hecke and Brandt operators.  The arithmetic part then isolates admissible Frobenius discriminants and proves elementary nonvanishing criteria before returning to the conductor-$37$ example and the reciprocity problem stated near the end.  Throughout, proved statements, computations, interpretations, and conjectures are kept separate.

\section{The peel as a finite developing map}

\subsection{The combinatorial data}
Let
\[
f:X\longrightarrow\PP^1
\]
be a Belyi map.  Choose the standard two-triangle decomposition of the sphere with vertices $0,1,\infty$, and pull it back.  We obtain a decorated triangulation of $X$.  Here the decoration is the data inherited from the standard decomposition, in particular the labels of vertices by $0,1,\infty$ and the distinction between the two triangle types.  If $d=\deg f$, the triangulation is encoded by a transitive monodromy pair
\[
(\sigma_0,\sigma_1)\in S_d\times S_d.
\]
Here $\sigma_0$ and $\sigma_1$ are the permutations of the $d$ sheets obtained by analytic continuation around $0$ and $1$; the monodromy around $\infty$ is $\sigma_\infty=(\sigma_0\sigma_1)^{-1}$.  Transitivity means that the covering $X\to\PP^1$ is connected.  Thus vertices, valences, Euler characteristic, and genus are finite combinatorial data.

Let $G$ be the dual graph of the triangulation.  Choosing a spanning tree $T\subset G$ and unfolding triangles successively across the edges of $T$ produces a simply connected finite union of triangles $P$.  The developing map on this finite union is the peel map.

The precise definition of a peel was given at the beginning of the paper in Definition~\ref{def:peel-overture}.  In the present Euclidean Belyi situation the model triangles are equilateral Euclidean triangles.  A spanning-tree unfolding produces one convenient peel; continuing the same unfolding indefinitely gives the usual developing picture.

\begin{remark}
A spanning-tree unfolding is a convenient way to construct a peel, but I do not regard the spanning tree as the essence of the definition.  The essence is the finite developing map $\Psi:P\to X$ and its boundary identifications.
\end{remark}

\subsection{The boundary graph}
The boundary graph $\mathcal G_P=\Psi(\partial P)$ and its elementary topology were treated in Propositions~\ref{prop:peel-bouquet} and~\ref{prop:peel-H1}.  In genus one it is homotopy equivalent to a bouquet of two circles and carries all of $H_1(X,\Z)$.  We shall use this fact repeatedly without restating it.

\section{The singular equilateral metric and the cubic differential}

This is the first place where one must distinguish topology from complex analysis.

\subsection{A cubic differential on the sphere}
On the sphere with coordinate $z$ consider
\[
q_0=\frac{(dz)^3}{z^2(z-1)^2}.
\]
The metric $|q_0|^{2/3}$ is the flat metric naturally associated with the equilateral two-triangle orbifold, that is, the sphere equipped with the cone data coming from this triangular structure.  Its \emph{linear holonomy}---the rotational part of the Euclidean holonomy obtained by parallel transport around singularities---is contained in the cube roots of unity.

Pulling back by $f$ gives
\[
q_f=f^*q_0=\frac{(df)^3}{f^2(f-1)^2}.
\]

\begin{theorem}[Canonical cubic differential]
Let $x\in f^{-1}\{0,1,\infty\}$, and let $e_x$ be the ramification index of $f$ at $x$.  Then
\[
\ord_x(q_f)=e_x-3.
\]
If $v_x$ is the valence of the corresponding vertex in the triangular decomposition, then $v_x=2e_x$ and
\[
\boxed{\ord_x(q_f)=\frac{v_x-6}{2}.}
\]
\end{theorem}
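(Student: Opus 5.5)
The plan is a local computation in coordinates, done separately over the three branch values. Since $q_f=f^*q_0$ and pullback of a meromorphic cubic differential is computed chart by chart, it suffices to choose a local coordinate $w$ centered at $x$ in which $f$ has a normal form, substitute, and read off the exponent of $w$.

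First, the case $f(x)=0$. Choose $w$ with $f=w^{e_x}$ near $x$. Then $df=e_xw^{e_x-1}\,dw$, and $f-1$ is a unit near $x$, so
\[
q_f=\frac{e_x^3w^{3e_x-3}(dw)^3}{w^{2e_x}(w^{e_x}-1)^2},
\]
which has order $3e_x-3-2e_x=e_x-3$.

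The case $f(x)=1$ is the same computation with $f=1+w^{e_x}$. Now $df=e_xw^{e_x-1}dw$, the factor $(f-1)^2=w^{2e_x}$ supplies the pole, and $f^2$ is a unit. The order is again $e_x-3$.

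The point over $\infty$ is the step needing the most care, because $q_0$ must be rewritten in the coordinate $u=1/z$ near $\infty$. I would first check directly that $dz=-u^{-2}du$ gives
\[
q_0=-\frac{u^{-6}(du)^3}{u^{-4}(1-u)^2}=-\frac{(du)^3}{u^2(1-u)^2},
\]
so $q_0$ has a double pole at $\infty$ as well. This is the $S_3$-symmetry of the three cone points. With $1/f=w^{e_x}$, the same computation as at $0$ then gives order $e_x-3$.

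For the boxed formula I would use the structure of the pulled-back triangulation. Each small loop around $x$ maps $e_x$ times around $f(x)$. Downstairs, the two-triangle decomposition has exactly two triangles (one of each type) meeting at each of $0,1,\infty$. Hence $2e_x$ triangles meet at $x$. The valence $v_x$ is the number of edges at $x$, which equals the number of triangles around a vertex of a surface triangulation, so $v_x=2e_x$. Substituting $e_x=v_x/2$ into $\ord_x(q_f)=e_x-3$ gives $(v_x-6)/2$.

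As a consistency check, in genus one we have $\sum_x(e_x-3)=3d-3d=0=3(2g-2)$, as required for a cubic differential. At points away from $f^{-1}\{0,1,\infty\}$ the map $f$ is a local biholomorphism onto a region where $q_0$ is holomorphic and nonvanishing, so those points contribute nothing. I do not expect any genuine obstacle beyond the bookkeeping at $\infty$.
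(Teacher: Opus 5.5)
Your proposal is correct and follows the paper's proof. You use the same local normal-form computation: $f=w^{e_x}$ gives order $3(e_x-1)-2e_x=e_x-3$, the same argument is repeated at $1$ and $\infty$, and $v_x=2e_x$ comes from the two triangle types alternating around a vertex. Your explicit check that $q_0$ also has a double pole at $\infty$ in the coordinate $u=1/z$ fills in the step the paper compresses into ``the same calculation applies above $1$ and $\infty$.''
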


\begin{proof}
Near a point above $0$, write $f=t^{e_x}$.  Then
\[
q_f\sim\frac{(e_xt^{e_x-1}dt)^3}{t^{2e_x}}
=e_x^3t^{e_x-3}(dt)^3.
\]
The same calculation applies above $1$ and $\infty$.  Around a vertex, black and white triangles alternate, hence $v_x=2e_x$.
\end{proof}

Thus the divisor of the cubic differential is read directly from the dessin.  Writing $\Div(q_f)$ for the formal sum of zeros and poles with their orders,
\[
\Div(q_f)=\sum_x\left(\frac{v_x-6}{2}\right)x.
\]
For a torus this divisor has degree zero, as it must.

\subsection{When the developed peel gives the true period lattice}
If every vertex has valence six, then $q_f$ has neither zero nor pole.  On a torus this implies
\[
q_f=c\eta^3
\]
for a nowhere vanishing holomorphic one-form $\eta$.  In that case the equilateral metric is a smooth flat metric and the developing map is the ordinary developing map of the complex torus.  The translation vectors exposed by the peel are genuine periods.

If some valence differs from six, the situation changes.  The same peel still gives the correct topology, but the developed vectors belong to a singular flat structure with rotational holonomy.  They need not be periods of the holomorphic one-form which uniformizes the elliptic curve.

This is why I prefer to say that a peel is a kind of developing map.  One must always ask: developing map for which geometric structure?

\section{The canonical cubic-root cover}

The rotational holonomy can be removed canonically.

\begin{theorem}[Canonical cover]
Let $(X,q_f)$ be as above.  There is a cyclic cover
\[
\pi:Y\to X
\]
of degree dividing $3$ such that
\[
\pi^*q_f=\omega^3
\]
for a meromorphic one-form $\omega$ on $Y$.  Algebraically one may take the normalization of
\[
u^3=f^2(f-1)^2,
\qquad
\omega=\frac{df}{u}.
\]
In the usual equilateral situation $\omega$ is holomorphic.
\end{theorem}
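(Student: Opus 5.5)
The plan is to build the cover directly from the algebraic equation and then verify the three assertions in turn: the identity $\pi^*q_f=\omega^3$, the degree, and holomorphy. Let $K=\C(X)$ be the function field of $X$, and consider the extension
\[
L=K(u),\qquad u^3=f^2(f-1)^2 .
\]
Let $Y$ be the normalization of $X$ in $L$ when the extension is nontrivial. When $f^2(f-1)^2$ is already a cube in $K$, take $Y=X$ and $u$ a cube root in $K$. Since $\C$ contains the cube roots of unity, $L/K$ is Kummer, so it is cyclic of degree $1$ or $3$. This proves the first claim: $\pi:Y\to X$ is cyclic of degree dividing $3$.

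For the identity, set $\omega=df/u$, a meromorphic one-form on $Y$. Then
\[
\omega^3=\frac{(df)^3}{u^3}=\frac{(df)^3}{f^2(f-1)^2}=\pi^*q_f
\]
on the generic point, and hence everywhere as meromorphic cubic differentials. This step is a one-line computation.

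The remaining work, and the main obstacle, is to show that $\omega$ is holomorphic in the usual equilateral situation. I will read this as the case where every vertex valence $v_x$ is at least $6$, and more generally treat any configuration in which $\ord_x(q_f)\ge0$ at every point. The key observation is that orders multiply under pullback: if $y\in Y$ lies over $x$ with ramification index $r\in\{1,3\}$, then
\[
3\,\ord_y(\omega)=\ord_y(\pi^*q_f)=r\,\ord_x(q_f)+3(r-1).
\]
Here the term $3(r-1)$ comes from $(dz)^3$ pulling back to $(r s^{r-1}ds)^3$. Combining this with the Canonical cubic differential theorem, which gives $\ord_x(q_f)=e_x-3$, yields
\[
\ord_y(\omega)=\frac{r(e_x-3)}{3}+(r-1).
\]
I will check this case by case. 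The ramification of $\pi$ occurs exactly where $\ord_x(q_f)\not\equiv0\pmod 3$, since locally $q_f=t^{k}(dt)^3\cdot(\text{unit})$ has a cube root exactly when $3\mid k$. At such points $r=3$ and $\ord_y(\omega)=e_x-1\ge0$. At points where $3\mid \ord_x(q_f)$ the cover is unramified, so $r=1$ and $\ord_y(\omega)=(e_x-3)/3$. This is nonnegative when $e_x\ge3$, that is when $v_x\ge6$. Away from $f^{-1}\{0,1,\infty\}$, $q_f$ is holomorphic and nonvanishing, so $\omega$ is too.

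Consequently the only possible poles of $\omega$ arise at unramified vertices with $e_x<3$ and $3\mid e_x-3$, which is impossible for $e_x\in\{1,2\}$. Vertices with $e_x\in\{1,2\}$ are therefore always ramified, and there $\ord_y(\omega)=e_x-1\ge0$. So $\omega$ is in fact holomorphic at every point, and the ``usual situation'' hypothesis is needed only to phrase the statement cleanly. The delicate point is the local Kummer analysis: identifying which local branches of $u$ exist, and checking that the normalization ramifies exactly when $3\nmid \ord_x(q_f)$. I will carry this out carefully in the coordinate $f=t^{e_x}$ above $0$ and $1$, and in the coordinate $f=t^{-e_x}$ above $\infty$.
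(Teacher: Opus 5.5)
Your proof is correct. The paper gives no argument for this theorem; it only records the model $u^3=f^2(f-1)^2$, $\omega=df/u$. Your Kummer-theoretic argument therefore supplies the missing verification rather than offering an alternative to one in the text.

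Three features of your route go beyond the statement.

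First, you treat separately the case where $f^2(f-1)^2$ is already a cube in $\C(X)$, taking $Y=X$. This is more careful than the literal statement, since there the normalization of $u^3=f^2(f-1)^2$ is three disjoint copies of $X$. The case really occurs, for example in the hexagonal situation with all $e_x=3$, where $q_f=c\eta^3$.

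Second, your local computation shows that $\omega$ is holomorphic for every Belyi map. Its orders are $\ord_y\omega=e_x-1$ at ramified points and $(e_x-3)/3$ at unramified ones. So the paper's hedge ``in the usual equilateral situation'' is unnecessary. These orders also justify the paper's remark that cone points become zeros of $\omega$. They reproduce its genus-five example as well, since $1+1+3+3=8=2g(Y)-2$ for valences $4,4,8,8$.

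Third, if you only need holomorphy, you can skip the case analysis. The case $f=z$ is the genus-one curve $E_0:u^3=z^2(z-1)^2$, on which $dz/u$ is holomorphic and nowhere vanishing. The substitution $z\mapsto f$ induces a nonconstant map $Y\to E_0$ pulling $dz/u$ back to $\omega$, and pullbacks of holomorphic forms are holomorphic. Your explicit orders are what this shortcut does not give you.

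One small point of phrasing: the cover extracts a cube root of the function $f^2(f-1)^2$, not of $q_f$. Your ramification criterion is still right. Locally $f^2(f-1)^2=(df/dt)^3\,\bigl(q_f/(dt)^3\bigr)^{-1}$, so $\ord_x\bigl(f^2(f-1)^2\bigr)\equiv-\ord_x(q_f)\pmod 3$.
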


The cover eliminates the rotational holonomy.  It does \emph{not} in general remove the cone singularities.  A cone point downstairs usually becomes a zero of $\omega$ upstairs.

For example, if a genus-one dessin has four vertices of valence
\[
4,4,8,8,
\]
then
\[
\Div(q_f)=-x_1-x_2+x_3+x_4.
\]
The cubic cover is triply ramified at the four points.  Riemann--Hurwitz gives
\[
2g(Y)-2=4(3-1)=8,
\]
so $g(Y)=5$.  The result is a translation surface of genus five, not a flat torus.  Here a translation surface means a Riemann surface equipped with a holomorphic one-form, whose local integrals give charts with translation transition maps away from the zeros.

The deck group $\mu_3$, the group of cube roots of unity, acts on cohomology and
\[
H^1(Y,\C)^{\mu_3}\cong H^1(X,\C).
\]
Thus the original elliptic curve is still present as the invariant factor.  This gives a correct replacement for the naive idea that one can always read the analytic modulus directly from two Euclidean displacement vectors of the peel.

\section{An example of a peel}
In the degree-three hexagonal example every vertex has valence six, so the developed peel is a genuine flat fundamental domain for the torus.
\begin{figure}[htb]
\centering
\includegraphics[width=.52\textwidth]{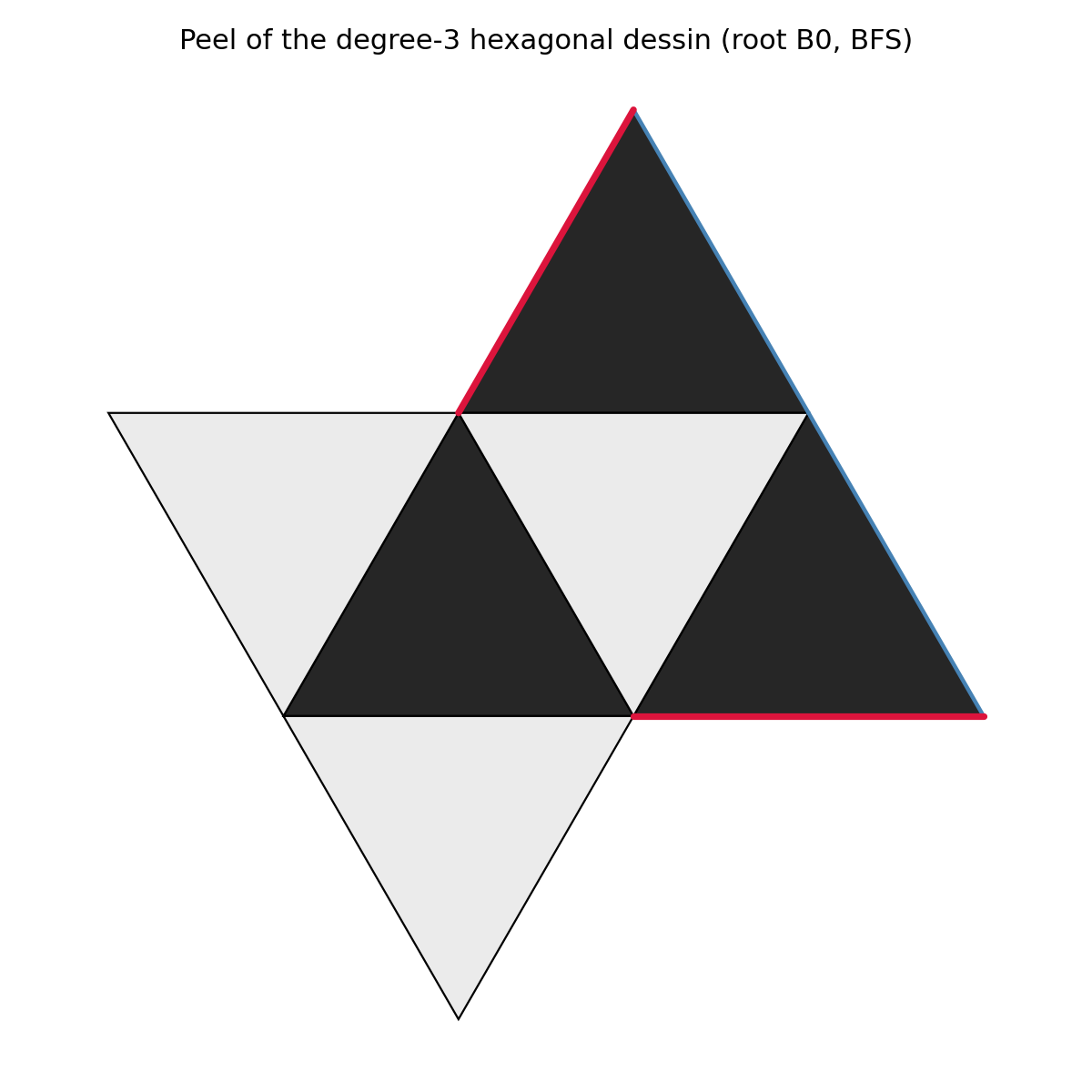}
\caption{A peel in the smooth hexagonal case.}\label{fig:peel-hex}
\end{figure}
If some valence differs from six, the same construction is a finite developing map for a singular flat metric; the canonical cubic-root cover, rather than the displayed side translations alone, is then needed to compare with the analytic periods.

\section{The universal cover reconstructed from a peel and the deck lattice}

The preceding discussion concerns one finite developed domain.  For a torus there is a second construction, already implicit in the original peel picture, which will be useful for the arithmetic part of the paper.

Let
\[
\Psi:P\longrightarrow E
\]
be a peel of a triangulated elliptic curve and put
\[
\mathcal G_P=\Psi(\partial P)\subset E.
\]
By Propositions~\ref{prop:peel-bouquet} and~\ref{prop:peel-H1}, $\mathcal G_P$ has the homotopy type of a bouquet of two circles, the inclusion $\mathcal G_P\hookrightarrow E$ induces an isomorphism on first homology, and
\[
\pi_1(\mathcal G_P)\twoheadrightarrow\pi_1(E)\cong\Z^2.
\]
After collapsing a maximal tree in $\mathcal G_P$, we may choose two loops $a,b$ whose images generate $\pi_1(E)$.  Thus the boundary data of the peel contains generators for the fundamental group; no particular word for the attaching map is needed in what follows.

The same data reconstructs the universal cover.  Take one copy $P_{m,n}$ of $P$ for each $(m,n)\in\Z^2$, and glue boundary pieces according to the side identifications of the original peel.  Crossing a side corresponding to $a$ changes the index by $(1,0)$, while crossing a side corresponding to $b$ changes it by $(0,1)$.  The resulting simply connected surface is naturally identified with the universal cover
\[
\widetilde E\longrightarrow E.
\]
The singular flat metric on $E$ pulls back to a singular flat metric $\widetilde g$ on $\widetilde E$.  The copies of $P$ are glued by the same edge isometries used in the peel, so translation of the index set $\Z^2$ carries each glued triangle isometrically to another one.  Consequently every deck transformation preserves $\widetilde g$.  Hence
\[
\Deck(\widetilde E/E)\cong\pi_1(E)\cong\Z^2
\]
acts by isometries.  The cone points, when they are present, lift to a $\Z^2$-periodic set of cone points.

\begin{proposition}[Universal cover from the peel data]
For a triangulated elliptic curve, the peel together with its boundary identifications determines the universal covering surface equipped with the pullback singular flat metric and its isometric deck action by $\Z^2$.
\end{proposition}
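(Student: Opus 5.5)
The plan is to build the universal cover as an explicit quotient and then recognise it through the classification of coverings. Let $\sim$ be the boundary identification relation of the maximal peel $\Psi:P\to E$. Every point of $P$ maps to the class of itself, so $\Psi$ factors through a continuous bijection $P/\!\sim\,\to E$. Because $P$ is compact and $E$ is Hausdorff, this bijection is a homeomorphism.

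Next I record combinatorial data from the boundary. The boundary $\partial P$ splits into finitely many sides, paired by isometries $s_j$ of the model geometry. For each side pair, pick a path in $P$ from a fixed base point $x_0$ in the interior to a point $y$ on one side. Then follow the path from the paired point $y'$ back to $x_0$. The image of this composite under $\Psi$ is a loop in $E$, and it gives a class $h_j\in\pi_1(E,\Psi(x_0))\cong\Z^2$. Proposition~\ref{prop:peel-H1} guarantees that the $h_j$ generate $\pi_1(E)$. This holds because the loops in $\mathcal G_P$ are products of these side-crossing loops, which is the standard Poincaré polygon argument applied to the 2-cell decomposition.

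Now define
\[
\widetilde E:=\bigl(P\times\Z^2\bigr)/\approx,
\]
where $(y,\lambda)\approx(y',\lambda+h_j)$ whenever $y\sim y'$ by the side pairing $s_j$. The map $\widetilde p(y,\lambda)=\Psi(y)$ is well defined. Each triangle of $P\times\{\lambda\}$ receives the model metric, and the gluings are by the same edge isometries used in the peel. Hence $\widetilde E$ carries a singular flat metric $\widetilde g$, and $\widetilde p$ is a local isometry wherever it is defined. The group $\Z^2$ acts by $\mu\cdot(y,\lambda)=(y,\lambda+\mu)$. This action respects $\approx$, preserves $\widetilde g$, commutes with $\widetilde p$, and is free and properly discontinuous.

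The main step, and the one I expect to be the obstacle, is to show that $\widetilde p$ is a covering map and that $\widetilde E$ is simply connected. Away from vertices, the covering property follows directly: a point of $E$ has a neighbourhood made of one or two half-discs in $P$, and the gluing rule stacks $\Z^2$ disjoint copies of that neighbourhood. At a vertex $v$ of $\mathcal G_P$, the corners of $P$ lying over $v$ are glued in a cyclic chain. Going once around this chain multiplies the accumulated labels $h_j$ to the class of a small loop around $v$, and that class is trivial in $\pi_1(E)$. So the chain closes up after one turn, and a neighbourhood of $v$ again lifts to $\Z^2$ disjoint disc-like copies, with cone angles preserved.

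This makes $\widetilde p$ a regular covering with deck group containing $\Z^2$. The group $\Z^2$ acts simply transitively on the fibres because the fibres are exactly the $\Z^2$-orbits. The monodromy homomorphism $\pi_1(E)\to\Z^2$ is the identity on the generators $h_j$, so it is the identity. Hence the covering corresponds to the trivial subgroup, and $\widetilde E$ is connected and simply connected. By uniqueness of the universal cover, $\widetilde E\cong\widetilde E_{\mathrm{univ}}$.

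Under this identification, $\widetilde g$ is the pullback metric, since $\widetilde p$ is a local isometry. The $\Z^2$-action is the deck action given by $\pi_1(E)\cong\Deck$, and this matches the correspondence recalled before Theorem~\ref{thm:peel-modulus-overture}. Everything above was built only from $P$, its triangles, and the pairing data $(s_j,h_j)$. The classes $h_j$ are themselves read off from the boundary identifications. This proves that the peel together with its boundary identifications determines the universal covering surface, its pullback singular flat metric, and its isometric deck action by $\Z^2$.
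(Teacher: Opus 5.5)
Your proposal is correct and uses the same construction as the paper: copies of $P$ indexed by $\Z^2$, glued along the peel's side identifications with index shifts, with translations of the index set acting isometrically because the gluings are the peel's own edge isometries. Your version is more complete than the paper's sketch, which labels sides only by $(1,0)$ and $(0,1)$ and simply asserts that the glued surface is the simply connected universal cover; you instead attach a label $h_j$ to every side pairing, check that vertex cycles close up, and derive simple connectivity from the monodromy.
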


This statement is topological and metric; it should not be confused with analytic uniformization.  If the equilateral metric is smooth, the reconstructed universal cover is the ordinary Euclidean plane and the deck action is by translations.  If cone singularities are present, the universal cover is still topologically $\R^2$, but the pulled-back metric is singular and $\Z^2$ acts by isometries of that singular metric.  The analytic uniformization of the elliptic curve gives a second realization
\[
E\cong\C/\Lambda,
\qquad
\Lambda=\Z\omega_1+\Z\omega_2.
\]
Thus the peel exposes first the abstract deck lattice $\pi_1(E)\cong\Z^2$; uniformization realizes it as the period lattice $\Lambda\subset\C$.  In the notation of \cite{JuanThesis}, the two generators act on the holomorphic universal cover as
\[
z\longmapsto z+\omega_1,
\qquad
z\longmapsto z+\omega_2,
\]
so that $\tau=\omega_1/\omega_2$.

\subsection{Finite quotients of the deck lattice}
Every finite-index subgroup $L\subset\Z^2$ gives a finite unramified covering
\[
\widetilde E/L\longrightarrow E.
\]
The multiplication-by-$\ell^n$ covering corresponds to the subgroup $\ell^n\Z^2$.  Its deck group is therefore
\[
\Z^2/\ell^n\Z^2\cong(\Z/\ell^n\Z)^2.
\]
After choosing the analytic lattice $\Lambda$, the same group is
\[
\Lambda/\ell^n\Lambda
\cong E[\ell^n].
\]
Writing $\Z_\ell$ for the ring of $\ell$-adic integers, the $\ell$-adic completion of the deck lattice is
\[
\varprojlim_n \pi_1(E)/\ell^n\pi_1(E)
\cong
\pi_1(E)\otimes\Z_\ell
\cong\Z_\ell^2.
\]
There are therefore two related inverse limits, and it is useful not to confuse them.  Taking the inverse limit of the \emph{covering spaces} gives the compact solenoidal lamination
\[
\mathcal S_\ell(E)=\varprojlim\bigl(E\xleftarrow{[\ell]}E\xleftarrow{[\ell]}E\xleftarrow{[\ell]}\cdots\bigr).
\]
Projection to the first coordinate makes $\mathcal S_\ell(E)$ a compact two-dimensional lamination over $E$ with totally disconnected fiber.  In the usual terminology for solenoids, such a fiber is a transversal.  The fiber over $0\in E$ is canonically
\[
\varprojlim_n E[\ell^n]=T_\ell(E)\cong\Z_\ell^2.
\]
Thus, in this precise sense, the Tate module is the profinite transversal over the origin of the solenoidal inverse limit.  Taking instead the inverse limit of the \emph{deck groups} gives the same underlying $\Z_\ell$-module.  Arithmetic enters when one retains the compatible $G_\Q$-action on the torsion fibers; this is exactly the usual $\ell$-adic Galois representation.  This is the point at which the solenoidal inverse-limit picture meets the arithmetic information carried by the multiplication-covering tower.

\subsection{Index-$q$ neighbors and isogenies}
The same deck lattice also gives the local finite-index geometry used later to model Hecke correspondences.  Let $q$ be prime.  Cyclic subgroups $C\subset E[q]$ of order $q$ are the lines in
\[
E[q]\cong\F_q^2,
\]
and hence are parametrized by $\PP^1(\F_q)$.  Analytically, if $E=\C/\Lambda$, such a subgroup is equivalent to an intermediate lattice
\[
\Lambda\subset\Lambda_C\subset q^{-1}\Lambda,
\qquad
[\Lambda_C:\Lambda]=q,
\]
with
\[
E/C\cong\C/\Lambda_C.
\]
Therefore the $q+1$ cyclic $q$-isogenies out of $E$ are precisely the $q+1$ index-$q$ neighbors of its rank-two lattice.

After $q$-adic completion this becomes the usual lattice model for the Bruhat--Tits tree of $\PGL_2(\Q_q)$.  Vertices are homothety classes of $\Z_q$-lattices in $\Q_q^2$ (two lattices are homothetic if one is obtained from the other by multiplication by a nonzero element of $\Q_q$), and adjacent vertices may be represented by lattices $L,L'$ with
\[
qL\subset L'\subset L,
\qquad [L:L']=q.
\]
Thus the deck lattice reconstructed from the peel already contains the local finite-index pattern that underlies the degree-$q$ Hecke correspondence.  This statement is local: the passage from that lattice pattern to an actual Hecke operator still requires the global correspondence module introduced below.

The three passages
\[
\pi_1(E)\cong\Z^2,
\qquad
\pi_1(E)\otimes\Z_\ell,
\qquad
\pi_1(E)\otimes\Z_q
\]
should therefore be kept together.  The first is the deck lattice reconstructed from the peel.  The second is its $\ell$-adic completion and is the underlying module of the Tate module; the arithmetic information is the compatible Galois action.  The third is the $q$-adic lattice geometry whose index-$q$ neighbors model the local Hecke correspondence.  The automorphic step is the passage from this local lattice geometry to a global space, such as modular forms or the supersingular Brandt module, on which the Hecke correspondences act.

\section{Arithmetic information from the multiplication-covering tower}

The peel reconstructs the abstract deck lattice and all its finite-index quotients, but this is still geometric information.  It does not determine the conductor.  Quadratic twists are isomorphic over $\C$ and may have different conductors over $\Q$.  Arithmetic enters through the rational model and, more specifically, through the Galois action on the finite quotients of the deck lattice.

As above, a Belyi pair is a smooth projective curve together with a Belyi map; here it is defined over $\Q$.  Write
\[
G_\Q=\operatorname{Gal}(\overline{\Q}/\Q)
\]
for the absolute Galois group of $\Q$.  Let $(E,\beta)/\Q$ be a genus-one Belyi pair, and fix a prime $\ell$.  For every $n\geq0$ define
\[
\beta_n=\beta\circ[\ell^n].
\]
Since multiplication by $\ell^n$ is finite \emph{\'etale} (equivalently here, finite and unramified) in characteristic zero, $\beta_n$ is again a Belyi map.

The relative deck transformations of
\[
(E,\beta_n)\longrightarrow(E,\beta)
\]
are translations by points of $E[\ell^n]$.  Therefore
\[
\Deck(\beta_n/\beta)\cong E[\ell^n].
\]
The $\ell$-adic Tate module of $E$ is, by definition,
\[
T_\ell(E)=\varprojlim_n E[\ell^n],
\]
where the transition maps are multiplication by $\ell$.  Passing to the inverse limit of deck groups gives the following exact statement.

\begin{proposition}[Multiplication-covering tower and Tate module]
For a rational Belyi pair $(E,\beta)$ and a prime $\ell$,
\[
\boxed{
\varprojlim_n\Deck(\beta\circ[\ell^n]/\beta)
\cong T_\ell(E).
}
\]
This is an isomorphism of $G_\Q$-modules.
\end{proposition}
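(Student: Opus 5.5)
The plan is to make each finite level of the tower explicit, check that the transition maps between deck groups are the multiplication-by-$\ell$ maps defining $T_\ell(E)$, and then check that the Galois action is compatible with these identifications.

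\emph{Finite level.} Fix $n$. A deck transformation of $(E,\beta_n)\to(E,\beta)$, with $\beta_n=\beta\circ[\ell^n]$, is an automorphism $g$ of the source $E$ satisfying $[\ell^n]\circ g=[\ell^n]$. Here I read the deck group relative to the intermediate covering $[\ell^n]$, which is what the notation $\Deck(\beta_n/\beta)$ means. Since $[\ell^n]$ is a connected unramified covering of degree $\ell^{2n}$, it is Galois: it corresponds to the normal subgroup $\ell^n\pi_1(E)$ of the abelian group $\pi_1(E)$. Translations $t_P\colon x\mapsto x+P$ with $P\in E[\ell^n]$ satisfy $[\ell^n](x+P)=[\ell^n]x$. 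There are $\ell^{2n}=\deg[\ell^n]$ of them, and they act freely. So $P\mapsto t_P$ is an isomorphism $E[\ell^n]\xrightarrow{\cong}\Deck(\beta_n/\beta)$. This agrees with the description $\Lambda/\ell^n\Lambda\cong E[\ell^n]$ in the subsection on finite quotients of the deck lattice.

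\emph{Transition maps.} The tower map from level $n+1$ to level $n$ is $[\ell]\colon E\to E$, since $\beta_{n+1}=\beta_n\circ[\ell]$. A deck transformation $t_P$ at level $n+1$ descends along $[\ell]$ to the map $x\mapsto x+\ell P$. Indeed, $[\ell](x+P)=[\ell]x+\ell P$. So the induced homomorphism $\Deck(\beta_{n+1}/\beta)\to\Deck(\beta_n/\beta)$ corresponds to multiplication by $\ell$ from $E[\ell^{n+1}]$ to $E[\ell^n]$. The identifications are therefore compatible with the transition maps, and passing to inverse limits gives $\varprojlim_n\Deck(\beta_n/\beta)\cong\varprojlim_n E[\ell^n]=T_\ell(E)$.

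\emph{Galois equivariance.} Since $\beta$ and $[\ell^n]$ are defined over $\Q$, the group $G_\Q$ acts on the set of $\overline{\Q}$-automorphisms $g$ commuting with $[\ell^n]$ by conjugation, $g\mapsto\sigma g\sigma^{-1}$. Let $\sigma\in G_\Q$. Because the group law on $E$ is defined over $\Q$, we have $\sigma(x+P)=\sigma(x)+\sigma(P)$, and hence $\sigma\circ t_P\circ\sigma^{-1}=t_{\sigma(P)}$. So the isomorphism at each level is $G_\Q$-equivariant, and equivariance survives the inverse limit because the transition maps $[\ell]$ are $\Q$-rational.

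I expect the main obstacle to be making precise in what sense deck transformations of a complex-analytic covering carry a Galois action. My first approach is to work algebraically throughout: deck transformations are $\overline{\Q}$-automorphisms of $E_{\overline{\Q}}$ over the intermediate covering. Every deck transformation of the analytic covering is algebraic, since the covering is finite étale and the translations already exhaust the deck group. This justifies transporting the conjugation action. A secondary point to flag is that the deck group should be taken relative to $[\ell^n]$, not over $\PP^1$. The full automorphism group of $\beta_n$ over $\PP^1$ may be larger, and only the part lying over the identity of $(E,\beta)$ is claimed to be $E[\ell^n]$.
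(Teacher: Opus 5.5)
Your proposal is correct and follows the same route as the paper. Both identify the level-$n$ relative deck group with $E[\ell^n]$ via translations, identify the transition maps with multiplication by $\ell$, and obtain $G_\Q$-equivariance from $\sigma t_P\sigma^{-1}=t_{\sigma P}$. You also supply two details the paper leaves implicit: the degree count showing that translations exhaust the relative deck group, and the explicit descent $[\ell]\circ t_P=t_{\ell P}\circ[\ell]$.
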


\begin{proof}
The transition map from level $n+1$ to level $n$ is induced by multiplication by $\ell$ on torsion.  Hence the inverse system is exactly
\[
\cdots\to E[\ell^{n+1}]\xrightarrow{[\ell]}E[\ell^n]\to\cdots\to E[\ell].
\]
For $P\in E[\ell^n]$ write $t_P$ for translation by $P$.  For $\sigma\in G_\Q$,
\[
\sigma t_P\sigma^{-1}=t_{\sigma P}.
\]
Thus the induced Galois action is the usual one on the Tate module.
\end{proof}

Consequently the multiplication-covering tower, together with its compatible Galois action, determines the representation
\[
\rho_{E,\ell}:G_\Q\to\GL_2(\Z_\ell).
\]
At a good prime $q\neq\ell$, let $\Frob_q$ denote an arithmetic Frobenius element modulo inertia, characterized on the residue field by $x\mapsto x^q$.  The characteristic polynomial of its action is
\[
X^2-a_q(E)X+q,
\]
so in particular
\[
a_q(E)=\Tr\rho_{E,\ell}(\Frob_q),
\qquad
\det\rho_{E,\ell}(\Frob_q)=q.
\]
This conclusion uses only the Galois action on the Tate module; no modularity theorem is involved.

\begin{theorem}[Arithmetic data determined by the multiplication-covering tower]\label{thm:peel-remembers}
Let $(E,\beta)/\Q$ be an elliptic Belyi pair, and consider the family of multiplication-covering towers
\[
\bigl(E,\beta\circ[\ell^n]\bigr)_{n\geq0}
\]
as $\ell$ varies over the primes.  With their natural Galois action these towers determine:
\begin{enumerate}[(i)]
\item the representations $\rho_{E,\ell}$ on $T_\ell(E)$;
\item for every prime $p$, the local conductor exponent $f_p(E)$ and hence the conductor
\[
N_E=\prod_p p^{f_p(E)};
\]
\item for every good prime $p$, the Frobenius trace $a_p(E)$ and the local Euler polynomial
\[
1-a_p(E)T+pT^2;
\]
\item the split or nonsplit character at every prime of multiplicative reduction;
\item consequently, the Hasse--Weil $L$-function of $E$, namely the Euler product determined by these local polynomials together with the bad-prime local factors; with the local Galois data included, the complete collection of local factors is determined.
\end{enumerate}
In particular, the conductor is determined by the multiplication-covering tower together with its Galois action, independently of modularity.
\end{theorem}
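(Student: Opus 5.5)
The plan is to reduce everything to the preceding Proposition (multiplication-covering tower and Tate module) and then invoke standard facts about $\ell$-adic representations of elliptic curves that make no use of modularity. By that Proposition, for each prime $\ell$ the inverse limit of deck groups $\varprojlim_n\Deck(\beta\circ[\ell^n]/\beta)$ is $G_\Q$-equivariantly isomorphic to $T_\ell(E)$. This gives (i) directly: the tower with its Galois action recovers $\rho_{E,\ell}:G_\Q\to\GL(T_\ell(E))\cong\GL_2(\Z_\ell)$ up to isomorphism. All remaining items are then statements that $\rho_{E,\ell}$, restricted to decomposition groups $G_{\Q_p}\subset G_\Q$ for varying $\ell$, determines the stated local invariants.

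For (iii), fix a prime $p$. I would first detect good reduction by the Néron--Ogg--Shafarevich criterion: $E$ has good reduction at $p$ if and only if $\rho_{E,\ell}$ is unramified at $p$ for some (equivalently every) $\ell\neq p$. At such $p$, choose $\ell\neq p$. The reduction map identifies $T_\ell(E)$ with $T_\ell(\widetilde E_p)$ compatibly with $\Frob_p$. The Weil pairing gives $\det=p$, and the trace equals $p+1-\#\widetilde E_p(\F_p)$ because $\#\widetilde E_p(\F_p)=\deg(1-\phi)=\det(1-\rho(\Frob_p))$. Hence $a_p(E)$ and $1-a_p(E)T+pT^2$ are determined, as already recorded before the theorem.

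For (ii), I would use the Serre--Tate definition of the conductor exponent for $p\neq\ell$: $f_p(E)=\varepsilon_p+\delta_p$. Here $\varepsilon_p=\dim V_\ell-\dim V_\ell^{I_p}$ is the tame part, and $\delta_p$ is the Swan conductor of $E[\ell]$ as an $I_p$-module. Both are computed from $\rho_{E,\ell}|_{I_p}$ with $\ell\ne p$, and they are independent of $\ell$. Since there are two choices of $\ell\neq p$ for every $p$, every $f_p(E)$ is determined, and hence so is $N_E$. The finiteness of the set of bad primes follows because $\rho_{E,\ell}$ is unramified outside $\ell N_E$.

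For (iv), multiplicative reduction is recognized by $\varepsilon_p=1,\ \delta_p=0$, equivalently by $\rho_{E,\ell}|_{I_p}$ acting unipotently and nontrivially. Tate uniformization over $\Q_p$, possibly after an unramified quadratic twist, shows $V_\ell|_{G_{\Q_p}}$ is an extension of $\chi$ by $\chi\cdot\chi_{\mathrm{cyc}}$, where $\chi$ is trivial or the unramified quadratic character. This holds according as the reduction is split or nonsplit, and $\chi=\pm1$ is read off as the eigenvalue of $\Frob_p$ on $V_\ell^{I_p}$, which is also the local factor $1-a_pT$ with $a_p=\pm1$. For additive primes the local factor is $\det(1-\Frob_pT\mid V_\ell^{I_p})=1$, since the invariants vanish. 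Item (v) then follows by assembling the Euler product from the factors $\det(1-\Frob_p p^{-s}\mid V_\ell^{I_p})^{-1}$.

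The main obstacle is not conceptual, since everything is classical. It lies in the care needed at $p=\ell$ and in justifying $\ell$-independence. The plan handles this simply by always choosing an auxiliary $\ell\neq p$, which is why the statement quantifies over the whole family of towers as $\ell$ varies. No modularity is used anywhere.
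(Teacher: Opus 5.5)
Your route is the paper's route. You use the preceding Proposition to identify the inverse limit of deck groups with $T_\ell(E)$ as a $G_\Q$-module, and then read everything off the restriction to a decomposition group at $p$ with an auxiliary $\ell\neq p$. The paper does this in three lines. You have correctly supplied the standard inputs (N\'eron--Ogg--Shafarevich, the Serre--Tate conductor $\varepsilon_p+\delta_p$, Tate uniformization), none of which uses modularity.

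There is one normalization slip in (iv)--(v) that you should fix.
\begin{itemize}
\item With $V_\ell=T_\ell(E)\otimes\Q_\ell$ and $\Frob_p$ arithmetic, which is the convention you need in (iii) to get trace $a_p$, your own description of the multiplicative case makes $V_\ell^{I_p}$ the sub-line $\chi\chi_{\mathrm{cyc}}$.
\item So $\Frob_p$ acts on $V_\ell^{I_p}$ by $\chi(\Frob_p)\,p=\pm p$, not by $\pm1$.
\item Consequently your factor $\det(1-\Frob_p p^{-s}\mid V_\ell^{I_p})^{-1}$ equals $(1\mp p^{1-s})^{-1}$ at a multiplicative prime, not the Hasse--Weil factor $(1\mp p^{-s})^{-1}$.
\end{itemize}

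The uniform correct recipe is to let arithmetic Frobenius act on the inertia \emph{coinvariants} $(V_\ell)_{I_p}$. At a multiplicative prime this is the quotient $\chi$, which gives $1-\chi(\Frob_p)T$. Equivalently, let geometric Frobenius act on $(V_\ell^{\vee})^{I_p}=H^1_{\mathrm{et}}(E_{\Qbar},\Q_\ell)^{I_p}$. At good primes this agrees with your formula, and at additive primes the coinvariants vanish, so the factor is again $1$.

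Since $\pm p$ determines $\pm1$, none of the determinacy assertions of the theorem is affected. Only the explicit local-factor formula you wrote needs this correction.
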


\begin{proof}
The proposition identifies the inverse limit with $T_\ell(E)$ as a $G_\Q$-module.  For $p\neq\ell$, the restriction to a decomposition group at $p$ determines inertia, the Artin conductor exponent, and Frobenius on inertia invariants; these give the conductor, the good-prime Euler factors, and the split/nonsplit character at multiplicative primes.  Varying $p$ and an auxiliary $\ell\neq p$ gives the stated data.
\end{proof}

\begin{remark}
The conductor appears here through the Galois action on the multiplication-covering tower.  Later, in the already modular example $X_0(11)\to X(1)$, the same kind of integer can be read directly from side-pairing matrices of a Schreier peel; that finite observation should not be confused with the general recovery above.
\end{remark}

\section{Hecke correspondences from elliptic peels}

The second construction is obtained by quotienting by cyclic subgroups.

Fix a prime $q$.  The $q+1$ lines
\[
L\subset E[q]
\]
are the cyclic subgroups of order $q$.  For every such $L$ there is an isogeny
\[
\pi_L:E\to E/L.
\]
Starting from a Belyi pair $(E,\beta)$ one may pull the triangulation back by $[q]:E\to E$ and then, for each cyclic subgroup $L\subset E[q]$, pass to the quotient $E/L$.  For each $L$, the map $\beta\circ[q]$ is invariant under translation by $L$ and therefore descends to a Belyi map on $E/L$; in this way one obtains $q+1$ quotient Belyi pairs and their corresponding peels.

The formal sum of these quotients is the classical cyclic-isogeny correspondence of degree $q$.

Here $\Z_q$ and $\Q_q$ denote the $q$-adic integers and numbers.  The local combinatorics is the Bruhat--Tits tree of $\PGL_2(\Q_q)$: its vertices are homothety classes of rank-two $\Z_q$-lattices.  Neighbors correspond to index-$q$ inclusions, equivalently to lines in a two-dimensional $\F_q$-space.  Thus each vertex has $q+1$ neighbors.

This is the point where the peel construction meets Hecke geometry for a structural reason rather than by analogy:
\[
\fbox{
\parbox{0.78\textwidth}{
\centering
the peel exposes a rank-two lattice,\\
and Hecke acts through finite-index lattice correspondences.
}}
\]

One must, however, use the correct correspondence module.  Here a push--pull operator means the usual operator obtained by pulling a class back along one map of a finite correspondence and then pushing it forward along the other, with the natural degree multiplicities.  I write $T_q$ for the push--pull operator attached to the degree-$q$ cyclic-isogeny correspondence and, more generally, $T_m$ for the corresponding Hecke operator of index $m$.  If one simply counts paths in the unweighted graph, the usual Hecke normalization is lost.  With these push--pull weights one recovers the usual Hecke algebra.  In the weight-two, trivial-character normalization used later,
\[
T_mT_n=T_{mn}\quad((m,n)=1),
\]
and for $q$ away from the level,
\[
T_{q^{r+1}}=T_qT_{q^r}-q\,T_{q^{r-1}}.
\]
The correction term is the contribution of the $q$ backtracking choices in the local lattice tree.  This is why ordinary unweighted path counting is not the Hecke operator.

\section{The two peel operations and the Eichler--Shimura comparison}

Two operations will be used below.  Pullback by the finite unramified multiplication coverings
\[
(E,\beta)\longmapsto(E,\beta\circ[\ell^n])
\]
keeps the elliptic curve fixed and records its torsion together with the Galois action, while cyclic quotients $E\to E/L$ move through the isogeny class and realize the index-$q$ lattice neighbors underlying the local Hecke correspondence.  Iterating the latter yields the Bruhat--Tits tree of $\PGL_2(\Q_q)$; the multiplication map $[q]$ corresponds to scaling the lattice by $q$ and therefore gives the radial scalar step in that local lattice picture.  Weighted push--pull, rather than unweighted path counting, gives the usual Hecke normalization.

At a good prime $q$, the Galois action on the multiplication-covering tower gives
\[
X^2-a_q(E)X+q,
\]
whereas Eichler--Shimura on the modular side gives
\[
F_q^2-T_qF_q+q=0.
\]
Thus the missing comparison is concrete: one must construct a one-dimensional Brandt--Hecke eigenspace $L$ for which
\[
T_q|_L=a_q(E)
\]
for every good $q$.  Producing that line independently is essentially the modularity problem addressed later.

\section{A modular Schreier peel}

When modularity is already present, a peel can retain visible congruence information.  For
\[
X_0(11)\longrightarrow X(1),
\]
the dessin is the Schreier graph for the action of $\PSL_2(\Z)$ on $\PP^1(\F_{11})$.  Unfolding a coset graph gives a peel whose side pairings lie in $\Gamma_0(11)$.  For a convenient transversal one obtains, for example,
\[
\begin{pmatrix}1&-1\\0&1\end{pmatrix},\qquad
\begin{pmatrix}4&-3\\11&-8\end{pmatrix},\qquad
\begin{pmatrix}3&-2\\11&-7\end{pmatrix}.
\]
The common divisibility by $11$ in the lower-left entries exhibits the level.  This is not a method for recovering the conductor of an arbitrary elliptic curve: the modular group is already built into the presentation.  It only shows that passing to a peel need not erase congruence data.

\section{The supersingular Brandt module at a prime conductor}

From this section through the prime-conductor Brandt discussion, $p$ is a prime conductor and, when an elliptic curve $E/\Q$ is being compared with the Brandt module, $E$ is assumed semistable of conductor $p$ unless another hypothesis is stated explicitly.  Thus $E$ has multiplicative reduction at $p$ and good reduction at every prime $q\neq p$ used for the Hecke and Brandt operators.

Let $p$ be prime and let $\mathcal S_p$ be the set of supersingular elliptic curves over $\overline{\F}_p$, considered up to isomorphism.  Recall that an elliptic curve in characteristic $p$ is supersingular if its geometric $p$-torsion has no nontrivial points (equivalently, its endomorphism algebra is the quaternion algebra ramified at $p$ and $\infty$).  Define the degree-zero supersingular module
\[
X_p^0
=
\left\{
\sum_{s\in\mathcal S_p}n_s[s]:\sum_s n_s=0
\right\}.
\]
For $q\neq p$ define the Brandt operator
\[
B_q[s]
=
\sum_{C\subset E_s,\ |C|=q}[E_s/C].
\]
This is exactly the degree-$q$ isogeny correspondence on the finite supersingular set.

Classically, $X_p^0$ is identified with the character lattice of the toric part of the N\'eron model of $J_0(p)$ at $p$, where $J_0(p)$ denotes the Jacobian of the modular curve $X_0(p)$.  Here the N\'eron model is the smooth group scheme over $\Z_p$ extending the Jacobian with its universal mapping property, and the character lattice of its toric part is $X^*(T)=\operatorname{Hom}(T,\mathbf G_m)$.  In the comparisons with an elliptic curve $E$ made below, we use this toric character group only when $E$ has multiplicative reduction at $p$; in particular this holds for a semistable elliptic curve of prime conductor $p$.  No uniform statement about additive reduction or its component group is being used.  Up to the standard automorphism weights---reciprocal factors accounting for the sizes of the automorphism groups of the supersingular elliptic curves, with the exceptional extra automorphisms at $j=0$ and $j=1728$---the Brandt operators above give the Hecke action on this lattice.

Thus there are now two independently defined arithmetic objects:
\[
\begin{array}{ccl}
\text{multiplication-covering tower of }E\text{ with its Galois action}
&\rightsquigarrow&
T_\ell(E),\ a_q(E),\\[1mm]
\text{supersingular Brandt module at }p
&\rightsquigarrow&
X_p^0,\ B_q.
\end{array}
\]
The modularity problem is the problem of making them meet.

\section{Frobenius, Atkin--Lehner, and the eigenspaces}

Arithmetic Frobenius acts on $\mathcal S_p$.  Every supersingular $j$-invariant lies in $\F_{p^2}$, so Frobenius acts as an involution on the set and therefore on $X_p^0$.
Write
\[
X_p^0\otimes\Q=X_p^{0,+}\oplus X_p^{0,-}.
\]

For prime level $p$, the Fricke involution (equivalently the Atkin--Lehner involution $w_p$) on $X_0(p)$ is induced on the upper half-plane by
\[
w_p:\tau\longmapsto -\frac{1}{p\tau},
\qquad
\begin{pmatrix}0&-1\\ p&0\end{pmatrix},
\]
whose projective action normalizes $\Gamma_0(p)$.  On the dual graph of the special fiber of $X_0(p)$---the graph having one vertex for each irreducible component of the fiber and one edge for each intersection point of two components---this involution and Frobenius differ by a sign, with the precise convention depending on whether one works with edges, characters, or homology.  In the convention used here the relation is
\[
w_p=-F
\]
on the character module.

For an elliptic curve with multiplicative reduction at $p$,
\[
a_p(E)=
\begin{cases}
+1,&\text{split multiplicative},\\
-1,&\text{nonsplit multiplicative}.
\end{cases}
\]
The corresponding modular newform has Atkin--Lehner sign $w_p=-a_p(E)$.  Therefore the expected one-dimensional Brandt eigenspace has Frobenius eigenvalue $a_p(E)$.

This already has content when one of the eigenspaces is one-dimensional.  In that case every Brandt operator preserves a one-dimensional space and hence acts there by a scalar.  The split/nonsplit multiplicative-reduction sign at the bad prime---equivalently $a_p(E)=+1$ or $-1$---then selects a full simultaneous Brandt eigenline, that is, a one-dimensional subspace on which every $B_q$ acts by a scalar, before one computes any good-prime eigenvalue.

\section{The example of conductor $37$}

This example is where most of the later questions first appear explicitly.

\subsection{The supersingular set}
In characteristic $37$ the supersingular $j$-invariants are
\[
s_0=8,
\qquad
s_\pm=3\pm\sqrt{15}.
\]
Frobenius fixes $s_0$ and exchanges $s_+$ and $s_-$.  Therefore
\[
X_{37}^{0,+}=\Q\bigl(-2[s_0]+[s_+]+[s_-]\bigr),
\]
and
\[
X_{37}^{0,-}=\Q\bigl([s_+]-[s_-]\bigr).
\]
Both Frobenius eigenspaces are one-dimensional.

Thus split and nonsplit multiplicative reduction select, respectively, primitive integral generators of the two one-dimensional eigenspaces
\[
c_+=(-2,1,1),
\qquad
c_-=(0,1,-1).
\]
This conclusion uses only Frobenius on the supersingular set and the local reduction sign.

\subsection{The $2$-Brandt matrix}
A direct calculation of the $2$-isogeny adjacency gives, in the ordered basis
\[
[s_0],[s_+],[s_-],
\]
\[
B_2=
\begin{pmatrix}
1&1&1\\
1&0&2\\
1&2&0
\end{pmatrix}.
\]
The constant vector has eigenvalue $3$.  On degree zero,
\[
B_2c_-=-2c_-,
\qquad
B_2c_+=0.
\]

For the curve
\[
E_a:y^2+y=x^3-x
\]
direct point counting gives
\[
a_2(E_a)=-2.
\]
For the other conductor-$37$ isogeny class one obtains
\[
a_2=0.
\]
Thus the first Brandt calculation matches the two arithmetic Frobenius traces.

This is an experiment, not a proof of the modularity theorem.  The important observation is that the local decomposition into the $+1$ and $-1$ Frobenius eigenspaces already explains why the two relevant simultaneous eigenlines have the shapes above.

\section{The reciprocity conjecture}

We can now state the central problem in its shortest form.  Recall that an elliptic curve over $\Q$ is \emph{semistable} if at every prime it has either good or multiplicative reduction.  By \emph{modularity} of $E$ I mean the existence of a weight-two normalized newform (a normalized Hecke eigenform new at that level) of level equal to the conductor of $E$ whose Hecke eigenvalues agree with the Frobenius traces $a_q(E)$ at the good primes.

\begin{conjecture}[Reciprocity conjecture]
Let $E/\Q$ be semistable of prime conductor $p$.  There exists a nonzero class
\[
c_E\in X_p^0\otimes\Q
\]
which lies in the corresponding Frobenius eigenspace (the $+1$ or $-1$ Frobenius eigenspace determined by the split/nonsplit reduction of $E$ at $p$) and satisfies
\[
\boxed{B_qc_E=a_q(E)c_E}
\]
for every prime $q\neq p$.
\end{conjecture}

The equality is simple to write.  It is not simple to prove.  In fact it is essentially modularity itself.

\[
\fbox{
\parbox{0.78\textwidth}{
\centering
\textbf{The key step toward the reciprocity conjecture.}
Construct directly from the Belyi peel and its arithmetic data a Hecke action
whose eigenvalues are the Frobenius traces $a_\ell(E)$,
without assuming modularity.
}}
\]

\begin{theorem}[Why the reciprocity conjecture is equivalent to modularity]
For a semistable elliptic curve $E/\Q$ of prime conductor $p$, the reciprocity conjecture is equivalent, up to the standard Jacquet--Langlands and Faltings identifications, to modularity of $E$.
\end{theorem}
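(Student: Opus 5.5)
We will prove the two implications separately. Both run through the Eichler--Deuring--Gross identification of $X_p^0\otimes\C$, with its Brandt operators, with the space $S_2(\Gamma_0(p))$ of weight-two cusp forms of prime level $p$, equipped with the Hecke operators. This is the \emph{Jacquet--Langlands} input in the statement. Concretely, we will use that the Brandt module $X_p$ is the module of functions on the class set of the definite quaternion algebra ramified at $\{p,\infty\}$. Its degree-zero part is Hecke-equivariantly isomorphic, after tensoring with $\C$, to $S_2(\Gamma_0(p))$. Since $p$ is prime and $X_0(1)$ has genus zero, every form in $S_2(\Gamma_0(p))$ is new. Moreover, the toric character group description recalled in the section on the supersingular Brandt module matches the Frobenius involution $F$ with $-w_p$.

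\emph{Modularity implies reciprocity.} Suppose $f_E$ is a newform of level $p$ with $a_q(f_E)=a_q(E)$ for all $q\neq p$. By multiplicity one and the Jacquet--Langlands isomorphism, its Hecke eigensystem occurs in $X_p^0\otimes\C$ with multiplicity one. Because the eigenvalues are rational integers, the eigenspace is cut out by rational linear equations, so it contains a nonzero $c_E\in X_p^0\otimes\Q$ with $B_qc_E=a_q(E)c_E$. The Atkin--Lehner sign of $f_E$ is $-a_p(E)$, and the relation $w_p=-F$ then places $c_E$ in the Frobenius eigenspace with eigenvalue $a_p(E)$, as in the section on Frobenius and Atkin--Lehner. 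The automorphism weights at $j=0,1728$ only rescale coordinates, so they do not affect the eigenvalues.

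\emph{Reciprocity implies modularity.} Given $c_E$, we transport it through Jacquet--Langlands to a nonzero cusp form $g\in S_2(\Gamma_0(p))$ satisfying $T_qg=a_q(E)g$ for all $q\neq p$. By strong multiplicity one on the new space we may normalize $g$ to a newform $f$ with $a_q(f)=a_q(E)$ at every good prime. Eichler--Shimura attaches to $f$ an elliptic curve $E_f/\Q$, a factor of $J_0(p)$, with $a_q(E_f)=a_q(f)$. Then $E$ and $E_f$ have equal Frobenius traces at almost all primes, so their $\ell$-adic representations have equal traces. By Chebotarev and the semisimplicity of $\rho_{E,\ell}$ (the \emph{Faltings} input), they are isomorphic. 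Faltings' isogeny theorem then gives that $E$ is isogenous to $E_f$. A nonconstant map $X_0(p)\to J_0(p)\to E_f\to E$ exists, which is the geometric form of Theorem~\ref{thm:modularity-overture}; the conductors agree because they are isogeny invariants.

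\emph{Main obstacle.} The hardest step is checking that the Jacquet--Langlands/Eichler correspondence is exactly Hecke-equivariant with the \emph{unweighted} Brandt operators $B_q$ as defined, and that the Frobenius/Atkin--Lehner sign convention matches. Our plan is to rely on Gross's ``Heights and special values'' formulation. There $B_q$ corresponds to $T_q$ on the character group of the torus of $J_0(p)$, and the weights $w_s=|\operatorname{Aut}(E_s)|/2$ only intertwine $B_q$ with its adjoint. The statement's ``up to the standard identifications'' is meant to absorb exactly these normalizations, so we will invoke the correspondence and the sign relation as cited inputs rather than reprove them.
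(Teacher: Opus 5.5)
Your proof is correct. The direction ``reciprocity $\Rightarrow$ modularity'' is essentially the paper's argument: Jacquet--Langlands transfer to a level-$p$ eigenform, then Chebotarev and Faltings to get an isogeny with the attached modular quotient.

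The converse is where you take a genuinely different route. The paper argues geometrically. A quotient $J_0(p)\twoheadrightarrow E$ induces, by contravariance on toric character groups of the N\'eron models at $p$, an injection $X^*(T_E)\hookrightarrow X^*(T_{J_0(p)})\cong X_p^0$. The class $c_E$ is then simply the image of a generator of $X^*(T_E)\cong\Z$, and the eigenvalue relation comes from Hecke equivariance of that map.

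You instead run the Eichler--Jacquet--Langlands dictionary backwards. You use multiplicity one and the rationality of the eigenvalues to cut out a rational eigenline. Then you use $w_p=-F$ (equivalently, $F$ corresponding to $U_p$) together with the Atkin--Lehner sign $-a_p(E)$ to place that line in the required Frobenius eigenspace.

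Each route has its advantages. The paper's produces a specific vector attached to the modular parametrization itself, which makes its warning about circularity concrete. However, it leaves the Frobenius-eigenspace condition of the conjecture implicit; that condition follows because Frobenius acts on $X^*(T_E)$ by $a_p(E)$ and the character map is Galois equivariant. Your route uses a single dictionary uniformly in both directions and does not need functoriality of N\'eron models under the quotient map. It also checks explicitly the Frobenius-eigenspace membership that the conjecture demands. The cost is that multiplicity one and the $U_p\leftrightarrow F$ normalization become additional cited inputs.
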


\begin{proof}
Suppose $c_E$ exists.  The definite quaternion algebra ramified at $p$ and $\infty$ realizes the Brandt module as a quaternionic automorphic module.  The Jacquet--Langlands correspondence transfers the simultaneous Brandt eigensystem of $c_E$---that is, the collection of eigenvalues of the commuting Brandt operators on the line $\Q c_E$---to a weight-two eigenform $f$ of level $p$.  By construction
\[
a_q(f)=a_q(E)
\]
for every good prime $q$.  The corresponding two-dimensional $\ell$-adic representations therefore have the same Frobenius characteristic polynomials on a density-one set.  The Chebotarev density theorem then identifies the semisimplified Galois representations, and Faltings's isogeny theorem gives an isogeny between $E$ and the elliptic modular quotient attached to $f$.

Conversely, if $E$ is modular, a quotient
\[
J_0(p)\twoheadrightarrow E
\]
induces, contravariantly on toric character groups at $p$, an injection
\[
X^*(T_E)\hookrightarrow X^*(T_{J_0(p)})\cong X_p^0.
\]
Since $X^*(T_E)\cong\Z$ for multiplicative reduction, the image of $1$ is a nonzero vector $c_E$.  Hecke equivariance gives
\[
B_qc_E=a_q(E)c_E.
\]
\end{proof}

This theorem is useful because it prevents circular arguments.  If one constructs the map
\[
X^*(T_E)\to X_p^0
\]
by using the modular quotient, one has not produced a new proof.  An independent argument must construct $c_E$ without starting from $J_0(p)\to E$.

\section{Three obstructions}

Three elementary obstructions delimit the problem.  First, a correspondence obtained only by pulling back a correspondence from $\PP^1\times\PP^1$ through Belyi maps acts trivially on the $H^1\otimes H^1$ part, since $H^1(\PP^1)=0$; the Belyi sphere alone therefore cannot create the missing elliptic correspondence.  Second, the monodromy permutations of two unrelated Belyi maps do not canonically identify their rank-two homology lattices.  Third, at conductor $37$ an unoriented Gross vector is Frobenius invariant and therefore misses the anti-invariant eigenspace.  The last point is what leads to the oriented CM construction below.

\section{Frobenius discriminants and CM packets}

This suggests the following experiment.

For a good prime $q$ define the Frobenius discriminant
\[
D_q=a_q(E)^2-4q<0.
\]
Let $\pi_q$ denote the Frobenius endomorphism of the reduction $E_{/\F_q}$.  Its characteristic polynomial is $X^2-a_q(E)X+q$, and the order $\Z[\pi_q]$ has discriminant $D_q$.  When the reduction is ordinary (that is, not supersingular), $\Z[\pi_q]$ is an order in the imaginary quadratic field $\Q(\pi_q)$; its index in the full endomorphism ring accounts for the usual square factor relating the two order discriminants.

Now fix the bad prime $p$.  An elliptic curve has \emph{complex multiplication} (CM) by an order $\mathcal O_D$ if its endomorphism ring over $\C$ contains (and in the present discussion equals) that imaginary quadratic order.  CM elliptic curves with endomorphism by $\mathcal O_D$ have reductions at primes above $p$, and when $p$ is inert or ramified in the relevant quadratic field these reductions may be supersingular.  Equivalently, one may describe the same data through optimal embeddings of $\mathcal O_D$ into maximal orders of the quaternion algebra ramified at $p$ and $\infty$.

\begin{definition}[CM packet]
For a negative discriminant $D$ and a prime $p$, the \emph{CM packet of discriminant $D$ at $p$} is the finite weighted divisor on the supersingular set in characteristic $p$ obtained by reducing CM elliptic curves with endomorphism ring $\mathcal O_D$ at primes above $p$.  Equivalently, it may be described by counting optimal embeddings of $\mathcal O_D$ into the maximal orders corresponding to the supersingular classes, with the usual automorphism weights, meaning reciprocal factors which compensate for the sizes of the automorphism groups of the corresponding elliptic curves (with the exceptional extra automorphisms at $j=0$ and $j=1728$).
\end{definition}

If one forgets all local orientation data, this is the CM divisor usually called a Gross packet; see, for example,~\cite{Gross}.  There is also an intrinsic theory of oriented supersingular elliptic curves, in which an orientation is an embedding of an imaginary quadratic field into the rational endomorphism algebra; see Onuki~\cite{Onuki}.  For the concrete Brandt calculations in this paper I keep the following lighter convention.  For every Frobenius orbit $\{s,F(s)\}$ of size two in the supersingular set, choose one member and call it $s^+$; call the other $s^-=F(s^+)$.  An \emph{oriented reduction class} is a reduction class together with this choice of sign.  Thus Frobenius-conjugate reductions are counted separately rather than identified.  Changing all choices exchanges the two signs and changes an anti-invariant class by an overall sign, but does not change the rational line that it spans.  The intrinsic orientation will be used below only to prove that Frobenius cannot fix an oriented CM reduction when $p$ is inert.  The remaining passage from intrinsic orientations to the ordinary Brandt module is kept explicit rather than hidden in the notation.

\begin{definition}[Oriented CM packet]
For a negative discriminant $D$ and a prime $p$, let
\[
R_{p,D}^{\mathrm{or}}
\]
denote the oriented refinement of the CM packet of discriminant $D$ at $p$.  If $F$ denotes the Frobenius involution on the supersingular module, define
\[
R_{p,D}^{\pm}=(1\pm F)R_{p,D}^{\mathrm{or}},
\]
and, when a degree-zero class is needed, replace a divisor $D=\sum_s d_s[s]$ by
\[
D^0=D-\frac{\deg D}{h_p}\sum_{s\in\mathcal S_p}[s],
\qquad
\deg D=\sum_s d_s,\quad h_p=\#\mathcal S_p.
\]
Thus $D^0\in X_p^0\otimes\Q$.
\end{definition}

The precise integral normalization depends on these automorphism weights.  For the present conceptual discussion only the resulting rational line matters.

\section{Admissible Frobenius discriminants and nonvanishing}\label{sec:admissible}

The preceding construction leaves two elementary questions.  First, which Frobenius discriminants can actually contribute CM reductions to the quaternion algebra ramified at $p$ and $\infty$?  Second, when does the resulting anti-invariant class fail to vanish after the orientation is forgotten?  Both questions have useful answers before any modularity input is used.

Let $E/\Q$ be semistable of prime conductor $p>2$.  The action of $G_\Q=\Gal(\Qbar/\Q)$ on the $p$-torsion $E[p]$ gives the residual representation
\[
        \overline\rho_{E,p}:G_\Q\longrightarrow\GL_2(\F_p).
\]
Let $q\neq p$ be a good prime and put
\[
        D_q=a_q(E)^2-4q.
\]
For an integer $a$ prime to $p$, $\left(\frac{a}{p}\right)$ denotes the Legendre symbol: it is $+1$ when $a$ is a square modulo $p$ and $-1$ when it is a nonsquare.
I shall call $q$ \emph{$p$-admissible} when $p\nmid D_q$ and
\[
        \left(\frac{D_q}{p}\right)=-1.
\]
Thus $p$ is inert in the quadratic field $\Q(\sqrt{D_q})$.

\begin{lemma}[Residual criterion for admissibility]\label{lem:residual-admissible}
For a good prime $q\neq p$, the characteristic polynomial of $\overline\rho_{E,p}(\Frob_q)$ is
\[
        X^2-a_q(E)X+q\pmod p,
\]
and its discriminant is $D_q\pmod p$.  If $p\nmid D_q$, then $q$ is $p$-admissible if and only if this polynomial is irreducible over $\F_p$.
\end{lemma}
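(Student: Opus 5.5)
The plan has two parts. First I identify the characteristic polynomial of $\overline\rho_{E,p}(\Frob_q)$ by reducing the $\ell$-adic statement with $\ell=p$. Then I use the elementary fact that a monic quadratic over $\F_p$ with $p$ odd is irreducible exactly when its discriminant is a nonsquare.

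\emph{Step 1.} Since $q\neq p$ is a good prime, $\rho_{E,p}$ is unramified at $q$. So $\Frob_q$ has a well-defined conjugacy class in the image, and the characteristic polynomial of $\rho_{E,p}(\Frob_q)$ on $T_p(E)$ is well defined. The section on the multiplication-covering tower records that this polynomial is $X^2-a_q(E)X+q$ for any $\ell\neq q$, in particular for $\ell=p$. The standard input is the identity $\det(1-\pi_q)=\#E(\F_q)=q+1-a_q(E)$ on $T_p(E_{/\F_q})$, together with $\det\pi_q=q$ from the Weil pairing. The key point is that the polynomial has coefficients in $\Z\subset\Z_p$. Since $E[p]=T_p(E)/pT_p(E)$ as a $G_\Q$-module, the characteristic polynomial of $\overline\rho_{E,p}(\Frob_q)$ is the reduction modulo $p$, namely $X^2-\overline{a_q(E)}X+\overline q$.

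Its discriminant is $\overline{a_q(E)}^2-4\overline q$, which is $D_q \bmod p$. This proves the first two assertions.

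\emph{Step 2.} Assume $p\nmid D_q$. Because $p>2$, the element $2$ is invertible in $\F_p$, so completing the square is legitimate. Over $\F_p$,
\[
X^2-\bar aX+\bar q=\Bigl(X-\tfrac{\bar a}{2}\Bigr)^2-\tfrac{\bar D}{4},
\]
where $\bar a=\overline{a_q(E)}$ and $\bar D$ is the reduction of $D_q$. A monic quadratic is reducible over $\F_p$ if and only if it has a root in $\F_p$. By the display, that happens if and only if $\bar D/4$ is a square in $\F_p$. Since $4$ is a nonzero square and $\bar D\neq0$, this is equivalent to $\left(\frac{D_q}{p}\right)=+1$.

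So irreducibility is equivalent to $\left(\frac{D_q}{p}\right)=-1$, and together with $p\nmid D_q$ this is precisely the definition of $p$-admissibility. The final remark, that $p$ is inert in $\Q(\sqrt{D_q})$, is the usual splitting criterion for a prime $p\nmid 2D_q$ in a quadratic field.

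The only delicate step is Step 1, because the paper states the Frobenius characteristic polynomial for $\ell$-adic representations with $\ell\neq q$, and here we need it with $\ell=p$. That case is covered, since $p\neq q$. What requires care is that the integral lattice $T_p(E)$ reduces to $E[p]$ equivariantly, so the reduction of the characteristic polynomial is the characteristic polynomial of the reduction. Everything else is elementary algebra over $\F_p$ using $p>2$.
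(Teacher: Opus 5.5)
Your proposal is correct and follows essentially the same route as the paper: identify the characteristic polynomial of Frobenius on $E[p]$ as the mod-$p$ reduction of $X^2-a_q(E)X+q$, then use that a quadratic over the odd field $\F_p$ is irreducible exactly when its discriminant is a nonsquare. The paper's proof is two lines. Yours adds the reduction from $T_p(E)$ to $E[p]$ and the completing-the-square computation explicitly, which is fine.
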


\begin{proof}
The trace and determinant of Frobenius on $E[p]$ are $a_q(E)$ and $q$ modulo $p$, so the displayed polynomial is immediate.  A quadratic polynomial over the odd finite field $\F_p$ is irreducible exactly when its discriminant is a nonsquare.
\end{proof}

This gives a simple Galois-theoretic source of admissible CM orders.

\begin{proposition}[Positive density of admissible Frobenius primes]\label{prop:positive-density}
Assume that the image of
\[
        \overline\rho_{E,p}:G_\Q\longrightarrow\GL_2(\F_p)
\]
contains an element whose characteristic polynomial is irreducible over $\F_p$.  Then the set of $p$-admissible good primes has positive natural density.  Here a set $\mathcal P$ of primes has natural density $\delta$ if
\[
\lim_{x\to\infty}\frac{\#\{q\in\mathcal P:q\le x\}}{\#\{q\text{ prime}:q\le x\}}=\delta.
\]
\end{proposition}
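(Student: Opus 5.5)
The plan is to apply the Chebotarev density theorem to the finite Galois extension $K=\Q(E[p])$ cut out by $\overline\rho_{E,p}$, and to translate $p$-admissibility into a conjugation-stable condition on $G=\Gal(K/\Q)\cong\operatorname{im}\overline\rho_{E,p}\subset\GL_2(\F_p)$ using Lemma~\ref{lem:residual-admissible}.

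First we set up the correspondence between primes and Frobenius elements. The extension $K/\Q$ is unramified outside $p\ell$-type bad primes; here it is unramified outside $p$ and the primes of bad reduction. Since the conductor is $p$, this means $K/\Q$ is unramified outside $p$. For every prime $q\neq p$ the element $\overline\rho_{E,p}(\Frob_q)$ is well defined up to conjugacy in $G$. Its characteristic polynomial is $X^2-a_q(E)X+q \bmod p$, as recorded in Lemma~\ref{lem:residual-admissible}.

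Next we define the target set
\[
C=\{g\in G:\ \text{the characteristic polynomial of } g \text{ is irreducible over } \F_p\}.
\]
The characteristic polynomial is a conjugacy invariant, so $C$ is a union of conjugacy classes of $G$. By hypothesis $C$ is nonempty.

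We then connect $C$ to admissibility. If $g\in C$, the discriminant of its characteristic polynomial is a nonsquare in $\F_p$, and in particular it is nonzero. So for $q\neq p$ with $\overline\rho_{E,p}(\Frob_q)\in C$, we get $p\nmid D_q$. Lemma~\ref{lem:residual-admissible} (using $p$ odd) then gives that $q$ is $p$-admissible. Conversely, admissibility forces $\Frob_q\in C$, although only the first direction is needed. Every prime $q\neq p$ is good, since the conductor is $p$.

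Finally we invoke Chebotarev. In its natural-density form it says the set of primes $q$ unramified in $K$ with Frobenius class in $C$ has natural density $|C|/|G|>0$. Removing the single prime $p$ does not change the density, which yields the proposition.

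The main obstacle is only bookkeeping. We must make sure the natural-density version of Chebotarev is quoted, rather than the Dirichlet-density version. We must also check that the characteristic polynomial of $\Frob_q$ on $E[p]$ is given by $a_q(E)$ and $q$ modulo $p$ for $q\neq p$. This follows from the Tate-module statement via reduction modulo $p$ of $\rho_{E,p}$, which is valid because $q\neq p$ is a prime of good reduction, so the Néron–Ogg–Shafarevich criterion gives unramifiedness.
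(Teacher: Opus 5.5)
Your proposal is correct and follows essentially the paper's proof: Chebotarev applied to $\Q(E[p])/\Q$, with $C$ the union of conjugacy classes whose characteristic polynomial is irreducible, combined with Lemma~\ref{lem:residual-admissible}. You are also slightly more careful than the paper in noting that irreducibility already forces $p\nmid D_q$, and one small correction is in order: the converse direction is not dispensable, because it makes the admissible set coincide with the Chebotarev set away from $p$, and that is what guarantees the limit defining natural density exists rather than only a positive lower density.
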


\begin{proof}
Let $L=\Q(E[p])$.  In $\Gal(L/\Q)$ consider the union $C$ of conjugacy classes whose images under $\overline\rho_{E,p}$ have irreducible characteristic polynomial.  By hypothesis $C$ is nonempty.  The Chebotarev density theorem gives density $|C|/|\Gal(L/\Q)|>0$ for the unramified primes with Frobenius in $C$.  Removing the finitely many primes of bad reduction and the prime $p$ does not change the density.  Lemma~\ref{lem:residual-admissible} identifies these primes with the desired admissible set.
\end{proof}

The discriminants obtained in this way cannot stay in a finite list.

\begin{proposition}[Unbounded admissible discriminants]\label{prop:unbounded-D}
Let $\mathcal Q$ be any positive-density set of good primes.  Then the set
\[
        \{D_q=a_q(E)^2-4q:q\in\mathcal Q\}
\]
is unbounded in absolute value.  In particular, under the hypothesis of Proposition~\ref{prop:positive-density}, there is a sequence of $p$-admissible primes $q_n$ with $D_{q_n}\to-\infty$.
\end{proposition}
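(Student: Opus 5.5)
The plan is to use the Hasse bound together with the fact that a positive-density set of primes is infinite.

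First, a set $\mathcal Q$ of positive natural density is infinite. If $\mathcal Q$ were finite, the numerator in the density quotient would stay bounded while $\pi(x)\to\infty$, so the density would be $0$. Hence $\mathcal Q$ contains primes $q$ of arbitrarily large size.

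Second, bound $D_q$ from above. Hasse's theorem for the good reduction $E_{/\F_q}$ gives $|a_q(E)|\le 2\sqrt q$, so $a_q(E)^2\le 4q$ and $D_q\le 0$. Hasse alone does not stop $D_q$ from being small, since $a_q^2$ could be close to $4q$. The useful direction is the other one. I would avoid a lower bound on $|D_q|$ through $a_q$ and argue by finiteness instead. Suppose $|D_q|\le M$ for all $q\in\mathcal Q$. Then $4q-a_q^2\le M$, so $a_q^2\ge 4q-M$, which means $|a_q|$ lies in the window $[\sqrt{4q-M},\,2\sqrt q]$. Write $a=|a_q|$ and $d=4q-a^2\in[0,M]$. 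The relation $4q=a^2+d$ pins $q$ down given $(a,d)$, but as $q$ grows there are infinitely many pairs, so the count alone does not finish the argument.

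This is where I expect the real difficulty, and I would settle it by a parity and factorization step. We have $d\equiv -a^2 \pmod 4$, so $d\equiv 0$ or $3\pmod 4$. Fix one of the finitely many values $d\in[0,M]$.

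If $d=0$, then $4q=a^2$, so $q$ is a square, which is impossible for a prime.

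If $d>0$, then $q=(a^2+d)/4$ is represented by a fixed binary form. Nothing elementary rules out infinitely many such primes, since for example $a^2+3=4q$ has infinitely many prime solutions heuristically. So finiteness must come from a density count instead. The number of integers $a$ with $|a|\le 2\sqrt x$ is $O(\sqrt x)$. For each of the at most $M+1$ values of $d$, each $a$ determines at most one $q\le x$. So
\[
\#\{q\le x:\ |D_q|\le M\}=O_M(\sqrt x)=o(\pi(x)).
\]
Therefore the primes with $|D_q|\le M$ have density zero. A positive-density set $\mathcal Q$ cannot be contained in such a set, so for every $M$ some $q\in\mathcal Q$ has $|D_q|>M$. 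This proves unboundedness, and since $D_q\le 0$, it gives $D_q\to-\infty$ along a sequence.

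Finally, for the ``in particular'' statement, apply this argument with $\mathcal Q$ the set of $p$-admissible good primes, which has positive density by Proposition~\ref{prop:positive-density}. Choosing $q_n$ with $|D_{q_n}|>n$ produces the required sequence with $D_{q_n}\to-\infty$.
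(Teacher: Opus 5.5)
Your proposal is correct, and once the parity/factorization detour is set aside it is the paper's argument. Hasse's bound leaves $O(\sqrt x)$ possible values of $a_q(E)$, each of the finitely many bounded discriminants together with $a_q(E)$ determines $q$, and so only $O_M(\sqrt x)=o(x/\log x)$ primes up to $x$ have $|D_q|\le M$, a set to which a positive-density set cannot be confined; your explicit remark that $D_q<0$, so that unboundedness means $D_{q_n}\to-\infty$, spells out a step the paper leaves implicit in its range $-M\le D<0$.
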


\begin{proof}
Fix $M>0$.  If $|D_q|\le M$ and $q\le x$, then for one of the finitely many integers $D$ with $-M\le D<0$ we have
\[
        q=\frac{a_q(E)^2-D}{4}.
\]
Hasse's bound gives $|a_q(E)|\le2\sqrt q\le2\sqrt x$.  For a fixed $D$, each integer value of $a_q(E)$ determines at most one value of $q$, so there are $O(\sqrt x)$ such primes.  Summing over the finitely many $D$ gives
\[
 \#\{q\le x:q\in\mathcal Q,\ |D_q|\le M\}=O_M(\sqrt x).
\]
A positive-density set of primes has order $x/\log x$, and $\sqrt x=o(x/\log x)$.  Hence the discriminants cannot remain bounded.
\end{proof}

There are two different meanings of nonvanishing, and it is useful not to mix them.  The first lives in the intrinsically oriented supersingular set.  If $K$ is imaginary quadratic, an orientation on a supersingular elliptic curve $S$ is an embedding
\[
        \iota:K\hookrightarrow\End^0(S).
\]
An isogeny transports the orientation by conjugation.  This is the standard oriented language used, for example, in~\cite{Onuki}.

\begin{theorem}[Oriented Frobenius nonvanishing]\label{thm:oriented-nonvanishing}
Let $K$ be an imaginary quadratic field in which $p$ is inert, and let $(S,\iota)$ be a supersingular elliptic curve in characteristic $p$ equipped with a $K$-orientation.  Let $F(S,\iota)$ denote its Frobenius transform with the transported orientation.  Then
\[
        (S,\iota)\not\simeq F(S,\iota)
\]
as oriented elliptic curves.  Consequently
\[
        [(S,\iota)]-[F(S,\iota)]\neq0
\]
in the free module on oriented supersingular classes.
\end{theorem}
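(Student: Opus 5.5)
We argue by contradiction, using the Frobenius action on the endomorphism algebra together with the local structure of $K$ at $p$. Write $\phi:S\to S^{(p)}$ for the $p$-power Frobenius isogeny. The transported orientation on $F(S)=S^{(p)}$ is $\iota^{(p)}$, obtained by applying Frobenius to the coefficients of each endomorphism. Equivalently, $\iota^{(p)}(\alpha)\circ\phi=\phi\circ\iota(\alpha)$ for every $\alpha\in K$, so transport along $\phi$ produces exactly $F(S,\iota)$.

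Suppose there is an isomorphism $u:(S,\iota)\xrightarrow{\sim}(S^{(p)},\iota^{(p)})$ of oriented curves, meaning $u\circ\iota(\alpha)=\iota^{(p)}(\alpha)\circ u$ for all $\alpha\in K$. Then the endomorphism
\[
        \psi:=u^{-1}\circ\phi\in\End(S)
\]
has degree $p$ and commutes with $\iota(K)$, because
\[
        \psi\,\iota(\alpha)=u^{-1}\iota^{(p)}(\alpha)\phi=\iota(\alpha)\,u^{-1}\phi=\iota(\alpha)\,\psi .
\]
Inside the quaternion algebra $\End^0(S)$, the centralizer of the maximal commutative subfield $\iota(K)$ is $\iota(K)$ itself. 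Hence $\psi=\iota(\beta)$ for some $\beta\in K$. Since $\psi$ is an endomorphism it is integral, so $\beta\in\mathcal O_K$, and the degree condition gives $N_{K/\Q}(\beta)=p$.

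This is where the hypothesis enters. Because $p$ is inert in $K$, the ideal $p\mathcal O_K$ is prime of norm $p^2$, so no element of $\mathcal O_K$ has norm $p$. (Any ideal of norm $p$ would force $p$ to split or ramify.) This contradiction shows $(S,\iota)\not\simeq F(S,\iota)$. Two distinct basis elements of the free module are linearly independent, so their difference $[(S,\iota)]-[F(S,\iota)]$ is nonzero.

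The main obstacle is to fix conventions cleanly. First, one must identify the Frobenius transform with transport along $\phi$, so that $\iota^{(p)}(\alpha)\phi=\phi\iota(\alpha)$ holds. The cleanest route is to apply $\phi$ to the defining equations of $S$ and of each endomorphism. Second, oriented isomorphism might be defined only up to the automorphisms of $S$, including the extra ones at $j=0$ and $j=1728$. This causes no trouble, since any isomorphism compatible with the orientations is all the argument uses. Third, the result must not depend on whether $\iota$ is a primitive $\mathcal O$-orientation for some order $\mathcal O\subset\mathcal O_K$. It does not, because the norm argument only needs $\beta\in K$ integral of norm $p$, and integrality follows from $\psi\in\End(S)$.
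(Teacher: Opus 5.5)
Your proposal is correct and follows essentially the same route as the paper: composing an orientation-preserving isomorphism with the $p$-power Frobenius gives a degree-$p$ endomorphism in the centralizer $\iota(K)$. That produces an element of $K$ of norm $p$, which is impossible when $p$ is inert. The only cosmetic difference is that the paper excludes norm $p$ by observing that $v_p(N_{K/\Q}(x))$ is even for every $x\in K^\times$, so your integrality step $\beta\in\mathcal O_K$ is harmless but not needed.
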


\begin{proof}
Suppose that an orientation-preserving isomorphism
\[
        u:F(S,\iota)\longrightarrow(S,\iota)
\]
exists.  Compose $u$ with the $p$-power Frobenius isogeny $\pi:S\to S^{(p)}$.  Because the orientation on $S^{(p)}$ is transported by $\pi$ and $u$ preserves it, the endomorphism
\[
        \alpha=u\circ\pi\in\End(S)
\]
commutes with $\iota(K)$.  The centralizer of the quadratic field $\iota(K)$ in the quaternion algebra $\End^0(S)$ is $\iota(K)$ itself.  Hence $\alpha\in\iota(K)$.  On the other hand
\[
        \deg\alpha=p.
\]
For an element of an imaginary quadratic field the degree is its field norm.  Thus $K$ would contain an element of norm $p$.  This is impossible when $p$ is inert, since the $p$-adic valuation of the norm of every element of $K^\times$ is even.  The contradiction proves the assertion.
\end{proof}

Theorem~\ref{thm:oriented-nonvanishing} does not by itself give a nonzero vector in the ordinary Brandt module, because forgetting the orientation can identify a curve with its Frobenius transform.  The exact extra condition is elementary.

\begin{proposition}[Nonvanishing after forgetting the orientation]\label{prop:ordinary-nonvanishing}
Let $D<0$ be a quadratic-order discriminant for which CM reduction modulo $p$ is supersingular.  Suppose that one CM class of discriminant $D$ reduces to a supersingular class $s$ with
\[
        F(s)\neq s.
\]
Then
\[
        [s]-[F(s)]\neq0\qquad\hbox{in }X_p^0,
\]
and it belongs to $X_p^{0,-}$.  In particular, an oriented packet in which this Frobenius orbit occurs with nonzero weight has nonzero anti-invariant part.
\end{proposition}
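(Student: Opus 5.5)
The statement is essentially linear algebra in the free module on the supersingular set, so the plan is to work directly with the basis $\{[s]:s\in\mathcal S_p\}$. First I would check that the vector lies in $X_p^0$. Its degree is $1-1=0$, so $[s]-[F(s)]$ belongs to the degree-zero module by the definition of $X_p^0$.

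Second, nonvanishing. The classes $[s]$ for distinct $s\in\mathcal S_p$ are linearly independent, because $X_p^0$ sits inside the free $\Z$-module on $\mathcal S_p$. Since $F(s)\neq s$ as isomorphism classes, the vector has coefficient $+1$ at $s$ and $-1$ at $F(s)$. Hence it is nonzero, both integrally and after tensoring with $\Q$.

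Third, the eigenspace claim. Frobenius is an involution on $\mathcal S_p$, since all supersingular $j$-invariants lie in $\F_{p^2}$, and it acts linearly on the module by permuting basis vectors. Therefore
\[
F\bigl([s]-[F(s)]\bigr)=[F(s)]-[F^2(s)]=[F(s)]-[s]=-\bigl([s]-[F(s)]\bigr),
\]
so the vector lies in the $-1$ eigenspace $X_p^{0,-}$.

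Finally, the packet statement. With the sign convention of the oriented CM packet, write $R^{\mathrm{or}}_{p,D}=\sum_t w_t[t^{\pm}]$, where each Frobenius orbit of size two is split into a chosen $s^+$ and $s^-=F(s^+)$. Applying $(1-F)$ kills every $F$-fixed class. A two-element orbit $\{s^+,s^-\}$ contributes $w^+([s^+]-[s^-])+w^-([s^-]-[s^+])=(w^+-w^-)([s^+]-[s^-])$.

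Distinct orbits involve disjoint sets of basis vectors, so the anti-invariant part is nonzero as soon as one orbit has $w^+\neq w^-$. This is where the phrase ``occurs with nonzero weight'' has to be read as the net oriented weight $w^+-w^-$ of that orbit, which is what the oriented refinement records, rather than the total weight $w^++w^-$. I expect this reading, and not any computation, to be the main obstacle.

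To justify that the oriented weight actually concentrates on one sign, I would invoke Theorem~\ref{thm:oriented-nonvanishing}: an oriented class is never isomorphic to its Frobenius transform, so an oriented reduction counts toward exactly one of $s^\pm$ and not symmetrically. Passing to $R^0$ does not affect the anti-invariant component, since $\sum_s[s]$ is $F$-invariant. The ingredients are this bookkeeping and the hypothesis $F(s)\neq s$ supplied by the statement.
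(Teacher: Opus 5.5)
Your first three steps match the paper's proof exactly: degree zero, nonvanishing because $s$ and $F(s)$ are distinct basis elements, and $F([s]-[F(s)])=-([s]-[F(s)])$ from $F^2=1$, since supersingular $j$-invariants lie in $\F_{p^2}$. Your orbit-by-orbit computation $(1-F)\bigl(w^+[s^+]+w^-[s^-]\bigr)=(w^+-w^-)([s^+]-[s^-])$, with disjoint supports for distinct orbits, makes explicit the ``in particular'' clause that the paper treats as immediate. Reading ``nonzero weight'' as the net signed coefficient of the orbit is the right reading, and with it the clause is pure linear algebra.

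You should, however, drop the final appeal to Theorem~\ref{thm:oriented-nonvanishing}. It is not needed, because nonzero net weight is the hypothesis of the clause. It also does not establish what you say:
\begin{enumerate}[(i)]
\item That theorem concerns intrinsic orientations $\iota:K\hookrightarrow\End^0(S)$ and requires $p$ inert, whereas the proposition also allows ramified $p$.
\item The paper stresses that the theorem does not pass to the ordinary Brandt module.
\item After orientations are forgotten, CM reductions fall symmetrically on $s$ and $F(s)$. Since $H_D\bmod p$ has coefficients in $\F_p$, the roots $s$ and $s^p$ occur with equal multiplicity. This is the paper's third obstruction: at $p=37$ the unoriented packet is proportional to $[s_0]+[s_+]+[s_-]$, which is $F$-invariant.
\end{enumerate}
Any asymmetry $w^+\neq w^-$ comes only from the sign convention that singles out $s^+$ in each orbit, not from the oriented theorem.
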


\begin{proof}
The two basis elements $[s]$ and $[F(s)]$ are distinct, so their difference is nonzero and has degree zero.  Since every supersingular $j$-invariant is defined over $\F_{p^2}$,
\[
        F([s]-[F(s)])=[F(s)]-[F^2(s)]=-[s]+[F(s)],
\]
which is the required $-1$ eigenrelation.
\end{proof}

The proposition has a convenient finite test.  Let $H_D(X)\in\Z[X]$ be the Hilbert class polynomial of the order $\mathcal O_D$, namely the monic polynomial whose complex roots are the $j$-invariants of elliptic curves with endomorphism ring $\mathcal O_D$.

\begin{corollary}[Hilbert class polynomial criterion]\label{cor:hilbert-nonvanishing}
If $H_D(X)\bmod p$ has an irreducible quadratic factor whose roots are supersingular, then the corresponding oriented CM reduction has nonzero projection to $X_p^{0,-}$.
\end{corollary}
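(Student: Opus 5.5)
The plan is to reduce the corollary to Proposition~\ref{prop:ordinary-nonvanishing} by translating an irreducible quadratic factor of $H_D(X)\bmod p$ into a Frobenius orbit of size two. Suppose $g(X)\in\F_p[X]$ is an irreducible quadratic factor of $H_D(X)\bmod p$ whose roots $j_0,j_1\in\F_{p^2}$ are supersingular. The first step is to observe that irreducibility over $\F_p$ forces $j_0\notin\F_p$. Hence the $p$-power Frobenius exchanges the two roots: $j_0^p=j_1\neq j_0$. If $s$ denotes the supersingular class with $j(s)=j_0$, then $F(s)$ has $j$-invariant $j_1$, so $F(s)\neq s$. Supersingular classes over $\overline\F_p$ are determined by their $j$-invariants, which is what makes this comparison legitimate.

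The second step is to check that $s$ genuinely arises as the reduction of a CM class of discriminant $D$. The complex roots of $H_D$ are the $j$-invariants of curves with endomorphism ring $\mathcal O_D$, and they are algebraic integers. Reducing these roots modulo a prime $\mathfrak P$ above $p$ in the ring class field gives exactly the roots of $H_D\bmod p$, with multiplicity. So $j_0$ is the reduction of some CM $j$-invariant. That CM curve has potentially good reduction, and its reduction has $j$-invariant $j_0$, hence is the supersingular class $s$.

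Proposition~\ref{prop:ordinary-nonvanishing} then shows that $[s]-[F(s)]$ is a nonzero element of $X_p^{0,-}$. The remaining step is the passage from this single orbit to the projection of the oriented packet. The oriented convention counts $s^+$ and $s^-=F(s^+)$ separately, so the anti-invariant part $\tfrac12(1-F)$ of the oriented reduction class supported at this orbit is $\pm\tfrac12([s]-[F(s)])$. This is nonzero.

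The main obstacle is making sure the weighted packet does not cancel this contribution. Cancellation could only occur if other CM classes landed in the same orbit with opposite orientation signs and equal weights. I would handle this by reading the statement, as Proposition~\ref{prop:ordinary-nonvanishing} does, at the level of the oriented CM reduction attached to the chosen root $j_0$. The orientation sign is fixed by declaring $s^+=s$, and the projection is then computed directly from the previous paragraph. If a packet-level statement is wanted, I would instead invoke the final clause of Proposition~\ref{prop:ordinary-nonvanishing}, which requires this orbit to occur with nonzero net oriented weight. The automorphism weights are positive, and $j_0\neq0,1728$ since those lie in $\F_p$. So the only remaining issue is the orientation bookkeeping, and I would state that hypothesis explicitly.
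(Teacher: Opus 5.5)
Your proposal is correct and is exactly the paper's argument: its entire proof is the observation that the two roots are $s$ and $s^p$ in $\F_{p^2}\setminus\F_p$, so that Proposition~\ref{prop:ordinary-nonvanishing} applies. The paper leaves implicit the additional points you check: that $j_0$ really is the reduction of a CM $j$-invariant, that $j_0\neq 0,1728$, and that the claim must be read for a single oriented reduction class. The last point is genuinely needed. Since $H_D\bmod p$ has coefficients in $\F_p$, $s$ and $F(s)$ occur with equal multiplicity in the unoriented packet, so only the orientation convention prevents cancellation.
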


\begin{proof}
The two roots are $s$ and $s^p$ in $\F_{p^2}\setminus\F_p$, so Proposition~\ref{prop:ordinary-nonvanishing} applies.
\end{proof}

The preceding statements are elementary except for Chebotarev.  There is also a useful connection with the much deeper equidistribution theory of CM points.  Herrero--Menares--Rivera-Letelier prove that, along a fixed $p$-adic discriminant class and under their stated hypotheses, CM measures with discriminant tending to $-\infty$ converge to a probability measure supported on the corresponding formal-CM locus~\cite{HMRL}.  Here the formal-CM locus means the subset of the $p$-adic moduli space singled out by the prescribed endomorphisms of the associated formal group; I use the term only in this standard sense from their paper.  Aka--Luethi--Michel--Wieser prove, under additional congruence hypotheses, surjectivity and equidistribution of simultaneous supersingular reductions for sufficiently large CM discriminant~\cite{ALMW}.

I record only the consequence that is needed here, with the support condition stated explicitly.

\begin{proposition}[Equidistribution route to nonvanishing]\label{prop:equid-route}
Let $D_n\to-\infty$ be discriminants in a fixed $p$-adic discriminant class satisfying the hypotheses of the equidistribution theorem of Herrero--Menares--Rivera-Letelier, and let $\nu$ be the limiting measure.  Suppose the support of $\nu$ meets the residue disk of a supersingular class $s$ with $F(s)\neq s$.  By the residue disk of $s$ I mean the inverse image of $s$ under reduction from the $p$-adic moduli space to characteristic $p$.  Then, for all sufficiently large $n$ in a subsequence, a CM point of discriminant $D_n$ reduces to $s$; in particular the corresponding oriented reduction has nonzero anti-invariant projection to $X_p^0$.
\end{proposition}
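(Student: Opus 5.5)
The plan is to read the statement as a direct consequence of weak convergence of measures, combined with Proposition~\ref{prop:ordinary-nonvanishing}. Let $\mu_n$ be the normalized CM measure of discriminant $D_n$: the uniform probability measure, with the usual automorphism weights, on the finite set of CM points of discriminant $D_n$ in the $p$-adic moduli space (the Berkovich or $p$-adic $j$-line, as in \cite{HMRL}). By hypothesis, $\mu_n\to\nu$ weakly along the given sequence.

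The first step is topological. Let $U_s$ be the residue disk of $s$, that is, the inverse image of $s$ under the reduction map. I will check that $U_s$ is open in the topology used by \cite{HMRL}. Open residue disks in the Berkovich line are open, and in the classical $p$-adic disk they are open and closed. Since $\operatorname{supp}\nu$ meets $U_s$ and $U_s$ is open, we get $\nu(U_s)>0$. This is the definition of support: every open neighborhood of a point of the support has positive mass.

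The second step applies the portmanteau inequality
\[
\liminf_n\mu_n(U_s)\ge\nu(U_s)>0 .
\]
Hence $\mu_n(U_s)>0$ for all sufficiently large $n$. In fact the inequality gives this for all large $n$, so passing to a subsequence is only needed if the convergence itself is along a subsequence. Positive mass means some CM point of discriminant $D_n$ lies in $U_s$, so its reduction is the supersingular class $s$. The oriented reduction packet of $D_n$ therefore contains the Frobenius orbit $\{s,F(s)\}$ with nonzero weight. All weights are positive automorphism weights, so there is no cancellation within the orbit. Proposition~\ref{prop:ordinary-nonvanishing} then gives a nonzero anti-invariant part $[s]-[F(s)]\in X_p^{0,-}$. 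After applying $(1-F)$, the coefficient of $[s]$ in $R^-_{p,D_n}$ is twice the oriented weight of $s^+$ minus twice that of $s^-$. To make this nonzero, I fix the orientation convention so that the chosen point is labelled $s^+$. Alternatively, I use the orbit-level statement of Proposition~\ref{prop:ordinary-nonvanishing}, which is exactly what the claim "corresponding oriented reduction" refers to.

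The main obstacle is the first step: matching the notion of "support meets the residue disk" with the topology and the moduli space in which \cite{HMRL} state their convergence. If their limit lives on a formal-CM locus inside a space where residue disks are not open, the support argument breaks. The first fix I would try is to use that $U_s$ is the preimage of a point under a continuous reduction map to a finite discrete set, which makes $U_s$ open and closed. The second is to replace $U_s$ by a closed sub-disk, which has positive $\nu$-mass, and argue with the open interior.
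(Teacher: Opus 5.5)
Your proposal is correct and follows the paper's proof: you get positive $\nu$-mass on an open set inside the residue disk of $s$ (the paper takes a small open neighborhood of a support point, you take the open disk itself), apply the open-set Portmanteau inequality to obtain a CM point of discriminant $D_n$ reducing to $s$, and finish with Proposition~\ref{prop:ordinary-nonvanishing}. One caution: relabelling which point is $s^+$ only flips the sign of the $(1-F)$-component and cannot by itself produce nonvanishing, so the last step rests on the orbit-level statement of Proposition~\ref{prop:ordinary-nonvanishing}, exactly as in the paper.
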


\begin{proof}
Choose an open neighborhood $U$ of a formal CM point in the residue disk of $s$ small enough to remain in that disk.  Since this point lies in the support of $\nu$, one has $\nu(U)>0$.  For the open set $U$, weak convergence (equivalently, the open-set inequality in the Portmanteau theorem) gives
\[
        \liminf_n\mu_{D_n}(U)\ge\nu(U)>0,
\]
where $\mu_{D_n}$ is the normalized CM measure.  Hence $U$ contains a CM point of discriminant $D_n$ for all sufficiently large $n$ along a subsequence.  Its reduction is $s$, and Proposition~\ref{prop:ordinary-nonvanishing} finishes the argument.
\end{proof}

\begin{remark}\label{rem:no-overclaim-equid}
Proposition~\ref{prop:equid-route} deliberately does not assert that the required support condition holds for every sequence of Frobenius discriminants $D_q$.  Proving that assertion in the generality needed here would require additional input.  The point is narrower: Propositions~\ref{prop:positive-density} and~\ref{prop:unbounded-D} supply infinitely many large admissible discriminants from the multiplication-covering tower together with its Galois action, while CM equidistribution gives a precise mechanism by which such discriminants can force nonvanishing once their limiting formal-CM support meets a non-rational supersingular residue class.  This separates a genuine nonvanishing problem from the much stronger Hecke-eigenvalue reciprocity problem.
\end{remark}

\section{The oriented CM calculation at conductor $37$}

Consider again
\[
E_a:y^2+y=x^3-x.
\]
Direct point counts give
\[
a_{19}(E_a)=0.
\]
Therefore
\[
D_{19}=a_{19}(E_a)^2-4\cdot19=-76.
\]
Moreover $37$ is inert in $\Q(\sqrt{-19})$, equivalently
\[
        \left(\frac{-76}{37}\right)=-1.
\]
Thus $q=19$ is $37$-admissible in the sense of Section~\ref{sec:admissible}, and it is the prime used here to construct the CM packet.  It should be distinguished from the prime $2$, at which the Brandt eigenvalue is checked below.  Indeed $D_2=a_2(E_a)^2-8=-4$ and $37$ splits in $\Q(i)$, so $q=2$ is not admissible for constructing this CM packet.

The class number---the number of proper ideal classes---of the order of discriminant $-76$ is $3$.  The corresponding Hilbert class polynomial, whose roots are the $j$-invariants of the CM elliptic curves with endomorphism ring of discriminant $-76$, therefore has degree three.  In the computation which motivated this section, its reduction modulo $37$ factors with roots exactly the three supersingular invariants
\[
s_0,\ s_+,\ s_-.
\]
Thus the three CM classes reduce one-for-one to the three supersingular classes.

Frobenius fixes $s_0$ and exchanges $s_+$ and $s_-$.  Hence the anti-invariant oriented part is nonzero and necessarily lies on
\[
X_{37}^{0,-}=\Q([s_+]-[s_-]).
\]
Consequently
\[
\boxed{R_{37,-76}^-\text{ is a nonzero rational multiple of }[s_+]-[s_-].}
\]

This is the most concrete construction in the present investigation that does not start from modularity:
\[
\begin{aligned}
E_a &\xrightarrow{\ a_{19}=0\ } D=-76
      \longrightarrow \mathcal O_{-76},\\
\mathcal O_{-76} &\longrightarrow
\text{oriented CM reduction mod }37
\longrightarrow [s_+]-[s_-].
\end{aligned}
\]

Nothing in this construction starts from a modular parametrization of $E_a$.

Because the anti-invariant Brandt eigenspace at $37$ is one-dimensional, every $B_q$ acts on this line by a scalar.  In particular the direct $B_2$ calculation gives
\[
B_2R_{37,-76}^-=-2R_{37,-76}^-.
\]
Since $a_2(E_a)=-2$, one obtains the concrete identity
\[
B_2R_{37,-76}^-=a_2(E_a)R_{37,-76}^-.
\]
Again, this is an experiment at one level and for one Hecke operator, namely $B_2$.  Its interest is that the relevant one-dimensional Brandt eigenspace was obtained from a Frobenius discriminant of $E$, not identified after first knowing the modular form.

\begin{remark}
The Frobenius discriminants $D_q=a_q(E)^2-4q$ do not satisfy the standard Gross-vector Hecke recurrence $D\mapsto Dq^2$; consequently no such recurrence is used below.  Any compatible relation must retain the oriented CM data.
\end{remark}

\section{A more precise reciprocity problem}

The $37$ experiment suggests a more precise question than the reciprocity conjecture.

\begin{conjecture}[Oriented CM-packet reciprocity]
Let $E/\Q$ be semistable of prime conductor $p$, and let
\[
D_q=a_q(E)^2-4q
\]
for good primes $q$.  Form the oriented CM reduction packets
\[
R_{p,D_q}^{\varepsilon_p}\in X_p^0\otimes\Q,
\]
where $\varepsilon_p=a_p(E)=\pm1$ selects the Frobenius sign.  Then the Hecke module generated by these packets contains a nonzero simultaneous Brandt eigenline $L_E$ satisfying
\[
B_\ell|_{L_E}=a_\ell(E)
\]
for all $\ell\neq p$.
\end{conjecture}

There is a stronger form, closer to what one would need for a proof.

\begin{problem}[Peel reciprocity problem]
Construct, using only the arithmetic Belyi peel, its multiplication-covering towers, Frobenius data, and oriented CM reduction, a nonzero class
\[
c_E\in X_p^0
\]
for which
\[
B_qc_E=a_q(E)c_E
\]
for every good prime $q$, without invoking modularity in the construction.
\end{problem}

A solution would give a new proof of prime-conductor semistable modularity.  The theorem of the preceding section explains why: once the eigenline is constructed, Jacquet--Langlands and Faltings finish the comparison.

\section{One-dimensional eigenspaces and the Fricke quotient}

At conductor $37$ the Frobenius eigenspaces in $X_{37}^0\otimes\Q$ are one-dimensional.  More generally, under the standard identification of the supersingular character module with the toric character group of $J_0(p)$, the relevant Frobenius eigenspace has the same dimension as the corresponding eigenspace of the Fricke involution
\[
w_p:\tau\longmapsto-\frac1{p\tau}
\]
on weight-two differentials.  In particular, when the associated Fricke quotient $X_0^+(p)=X_0(p)/\langle w_p\rangle$ has genus one, the split/nonsplit multiplicative-reduction sign at $p$ selects a unique one-dimensional Hecke line.  The conductor-$37$ example is of this type; at higher dimension the good-prime traces are needed to separate eigensystems.

\section{Main results and remaining questions}

We now collect the principal results established above and indicate the remaining arithmetic questions suggested by them.  The paper establishes the cubic differential attached to the equilateral structure, the canonical cubic-root cover, reconstruction of the universal cover and deck lattice from a peel, the $\ell$-adic solenoidal inverse limit of the multiplication-covering tower and the identification of its profinite transversal with $T_\ell(E)$, the local lattice interpretation of cyclic-isogeny Hecke correspondences, and the identification, at prime conductor, of the degree-zero supersingular module with the toric character lattice of $J_0(p)$ together with its Brandt--Hecke action.  It also gives a residual criterion for admissible Frobenius discriminants, proves that a residual image containing an element with irreducible characteristic polynomial over $\F_p$ supplies a positive-density set of admissible primes, and proves that their discriminants are unbounded.  On the CM side, Theorem~\ref{thm:oriented-nonvanishing} shows that inert Frobenius cannot fix an intrinsically oriented supersingular CM class, while Proposition~\ref{prop:ordinary-nonvanishing} and Corollary~\ref{cor:hilbert-nonvanishing} give a finite criterion for nonvanishing after forgetting the orientation.  Proposition~\ref{prop:equid-route} records the precise additional support condition under which $p$-adic CM equidistribution forces such nonvanishing.  Finally, the paper proves that an independently constructed class
\[
c_E\in X_p^0\otimes\Q,\qquad B_qc_E=a_q(E)c_E,
\]
would imply modularity, while modularity supplies such a class.

At conductor $37$ the supersingular set, Frobenius action, and the $2$-Brandt matrix are computed explicitly, and for the nonsplit isogeny class the discriminant $D_{19}=-76$ gives an oriented CM packet whose anti-invariant part lies on the expected Brandt eigenspace.

The principal remaining arithmetic question can now be stated precisely:
\begin{problem}[Peel reciprocity]\label{prob:peel-reciprocity-final}
Construct $c_E$ from the Belyi peel, its multiplication-covering towers, Frobenius data, and oriented CM reduction, without using a modular quotient, and prove
\[
B_qc_E=a_q(E)c_E
\]
for every good prime $q$.
\end{problem}
A closely related problem is to compare the intrinsic orientations of Theorem~\ref{thm:oriented-nonvanishing} functorially with the weighted Brandt module, including the automorphism weights at $j=0,1728$, and then to determine the transformation of the resulting packets under the Brandt operators.  A more modest intermediate problem is to remove the support hypothesis from Proposition~\ref{prop:equid-route} for a sequence of Frobenius discriminants $D_q$.

\section{Concluding remarks}

The paper starts with a finite triangulated picture.  In genus one a peel and its boundary identifications recover the deck lattice of the elliptic curve.  For each prime $\ell$, the finite unramified coverings $[\ell^n]:E\to E$ give a compatible tower.  The inverse limit of the covering spaces is an $\ell$-adic solenoidal lamination over $E$, and its profinite transversal over the origin is $T_\ell(E)$; when the rational structure is retained, the compatible Galois action on this transversal gives $\rho_{E,\ell}$.  Cyclic quotients $E\to E/L$ supply the index-$q$ lattice neighbors that underlie the local Hecke correspondence, while at the conductor prime the supersingular Brandt module provides the finite module in which the desired reciprocity is expressed by the eigenvalue relation above.

The conductor-$37$ calculation shows concretely how a Frobenius trace produces an admissible quadratic order whose oriented CM reduction lands in the expected one-dimensional Brandt eigenspace.  The general results of Section~\ref{sec:admissible} explain why two different primes naturally occur in this example: the good prime $19$ supplies the Frobenius discriminant used to form the CM packet, while the different good prime $2$ is used to check one Brandt eigenvalue.  The positive-density result for admissible Frobenius primes, together with the unboundedness of their discriminants, gives an infinite supply of candidate CM packets.  The oriented and ordinary nonvanishing criteria then isolate exactly what must happen before one even asks for the Hecke eigenvalue identity.  This still does not prove the reciprocity law.  The remaining equality $B_qc_E=a_q(E)c_E$ for all good $q$ is the modularity barrier.

\section*{Funding}
This work was supported by Proyecto PAPIIT, Direcci\'on General de Asuntos del Personal Acad\'emico, Universidad Nacional Aut\'onoma de M\'exico [grant number IN103324].

\section*{Acknowledgments}
The notion of a peel and the original questions concerning peels of equilateral triangulations were developed in joint work with Jos\'e Juan-Zacar\'ias.  I am grateful for that collaboration.  The present paper continues the subject in a different arithmetic direction.

I also gratefully acknowledge the use of the large language models ChatGPT (OpenAI) and Claude (Anthropic) for proofreading, checking calculations, and helping to organize the exposition.  The mathematical statements, conjectures, and responsibility for their correctness are the author's.

\end{document}